\newtheorem{theorem}{Theorem}[section]
\newtheorem{lemma}[theorem]{Lemma}
\newtheorem{corollary}[theorem]{Corollary}
\newtheorem{proposition}[theorem]{Proposition}
\newtheorem{definition}[theorem]{Definition}
\newenvironment{proof}{{\bf Proof.}}{\hfill$\Box$\\}
\newenvironment{proof of}{{\bf Proof of}}{\hfill$\Box$\\}
\newenvironment{remark}{{\vskip 1ex\bf Remark.}}{\\}
\newcommand{\C}{\mathbb{C}}
\newcommand{\Q}{\mathbb{Q}}
\newcommand{\R}{\mathbb{R}}
\newcommand{\ad}{\mathrm{ad}}
\newcommand{\rk}{\mathrm{rk}}
\newcommand{\spn}{\mathrm{span}}
\newcommand{\VV}{\mathcal{V}}
\newcommand{\XX}{\mathcal{X}}
\newcommand{\FF}{\mathcal{F}}
\newcommand{\DD}{\mathcal{D}}
\title{{\bf Paraconformal structures, ordinary differential equations and totally geodesic manifolds}}
\author{
Wojciech Kry\'nski\thanks{
{\bf Institute of Mathematics, Polish Academy of Sciences, ul.~\'Sniadeckich 8, 00-956 Warszawa, Poland}\newline 
E-mail: krynski@impan.pl.}
}
\begin{document}
\maketitle
\begin{abstract}
We construct point invariants of ordinary differential equations that generalise the Cartan invariants of equations of order two and three. The vanishing of the invariants is equivalent to the existence of a totally geodesic paraconformal structure which consist of a paraconformal structure, an adapted $GL(2,\R)$-connection and a two-parameter family of totally geodesic hypersurfaces on the solution space. The structures coincide with the projective structures in dimension 2 and with the Einstein-Weyl structures of Lorentzian signature in dimension 3. We show that the totally geodesic paraconformal structures in higher dimensions can be described by a natural analogue of the Hitchin twistor construction.  We present a general example of Veronese webs which correspond to the hyper-CR Einstein-Weyl structures in dimension 3. The Veronese webs are described by a hierarchy of integrable systems.
\end{abstract}

\section{Introduction}
A paraconformal structure, or a $GL(2,\R)$-structure, on a manifold $M$ is a smooth field of rational normal curves in the tangent bundle $TM$. The structures have been investigated since the seminal paper of Bryant \cite{B} who has related the geometry of four-dimensional $GL(2,\R)$-structures to the contact geometry of ordinary differential equations (ODEs) of order four and consequently constructed examples of spaces with exotic holonomies. The result of Bryant can be seen as a generalisation of the paper of Chern \cite{Ch} who has proved that the conformal Lorentzian metrics on three-dimensional manifolds can be obtained from ODEs of order three (see also \cite{FKN}). The higher dimensional cases has been treated by many authors, for example in \cite{Db,DT,GN,N}. It is proved that the solution space of an ODE has a canonical paraconformal structure if and only if the W\"unschmann invariants vanish. 

In the present paper we consider paraconformal structures admitting the following additional structure: an adapted connection $\nabla$ and a 2-parameter family of hyper-surfaces totally geodesic with respect to $\nabla$. The structures will be referred to as the \emph{totally geodesic paraconformal structures}. The structures are very well known in low dimensions. Indeed, in dimension 2 the structures coincide with the projective structures \cite{BDE} and in dimension 3 the structures coincide with the Einstein-Weyl structures of Lorentzian signature \cite{D,T}. An unified approach to the projective structures on a plane and to the three-dimensional Einstein-Weyl structures was given in the complex setting by Hitchin \cite{H} in terms of a twistor construction. Much earlier, it was proved by E.~Cartan that in both cases the geometry is related to the point geometry of ODEs \cite{C1,C2}. The solution space of an ODE of order 2 or 3, respectively, has a canonical projective structure or an Einstein-Weyl structure, respectively, if and only if the Cartan invariant or the Cartan and the W\"unschmann invariants, respectively, vanish. The Cartan invariant of the second order ODEs has already been known to Tresse \cite{Kr,Tr}.

Our first aim in the present paper is to provide an unified approach to the Cartan invariants of second and third order ODEs given up to point transformations and generalise them to higher order ODEs. The second aim is to analyse the geometry of the totally geodesic paraconformal structures. Finally we consider a general example based on special families of foliations, called Veronese webs, introduced by Gelfand and Zakharevich \cite{GZ} in connection to bi-Hamiltonian structures on odd dimensional manifolds.

Our first new result is Theorem \ref{thm1b} that gives a characterisation of those paraconformal structures that can be constructed from ODEs. This result concerns the contact geometry of ODEs and, in a sense, completes results of \cite{Db,DT,GN}.
Sections \ref{sec_ODEcon}-\ref{sec_gen} concern point geometry of ODEs and are the core of the paper. In particular Theorems \ref{thm_ord2} and \ref{thm_ord3} provide new approach to the Cartan invariants of ODEs of order 2 and 3 and give new, more simple, formulae for the invariants.  Theorems \ref{thm_ord4} and \ref{thm_gen} generalise the Cartan invariants to higher dimensions. 

Section \ref{sec_twistor} is devoted to a natural generalisation of the Hitchin twistor construction. The Hitchin construction involves a two-dimensional manifold and a curve with a normal bundle $O(1)$ or $O(2)$.  Clearly one can consider curves with normal bundles $O(k)$, $k>2$. We argue that so-obtained structures correspond to higher-dimensional totally geodesic paraconformal structures. This should be compared to \cite{MP} where the authors are interested in torsion-free connections. On contrary, generic totally geodesic paraconformal structures considered in the present paper have non-trivial torsion. In the Hitchin's paper there is no construction of invariants on the side of ODEs. We concentrate on this issue in the present paper and our invariants characterise those equations for which the solutions are curves with self intersection number $k$.

Section \ref{sec_ricci} is devoted to the Ricci curvature tensor of a totally geodesic paraconformal connection. We prove that the symmetric part of the Ricci curvature tensor is a section of the bundle of symmetric 2-tensors annihilating all null directions of the structure. In dimension 3 the condition is equivalent to the Einstein-Weyl equation.

The last Section of the paper is devoted to Veronese webs.  We show that any Veronese web defines a totally geodesic paraconformal structure such that the associated twistor space fibres over $\R P^1$. In particular, Veronese webs in dimension 3 give an alternative description of the hyper-CR Einstein-Weyl structures \cite{D,DK1,DK2}. We prove that in the general case the Veronese webs, or equivalently the totally geodesic paraconformal structures such that the corresponding twistor space fibres over $\R P^1$, are in a one to one correspondence with the solutions to the system
\begin{equation}\label{eq_int_sys}
(a_i-a_j)\partial_0w\partial_i\partial_jw+ a_j\partial_iw\partial_j\partial_0w- a_i\partial_jw\partial _i\partial_0w=0,\qquad  i,j=1,\ldots,k,
\end{equation}
where $a_i$ are distinct constants and $w\colon\R^{k+1}\to\R$. In this way we give a geometric meaning to the hierarchy of integrable systems introduced in \cite{DK1}.

Another applications of our results to the Veronese webs include: a construction of the canonical connections for the Veronese webs (Theorem \ref{thm3}) and a local characterisation of the flat Veronese webs in terms of the torsion of the canonical connection (Corollary \ref{cor_webs}). Moreover, we give new, elementary proof of the so-called Zakharevich conjecture \cite{P} (Corollary \ref{cor_webs2}). All these results translate to bi-Hamiltonian structures via the Gelfand-Zakharevich reduction \cite{GZ}.


\section{Paraconformal structures and connections}\label{sec_paraconf}

Let $M$ be a manifold of dimension $k+1$. A paraconformal structure on $M$ is a vector bundle isomorphism
$$
TM\simeq \underbrace{S\odot S\odot\cdots\odot S}_k
$$
where $S$ is a rank-two vector bundle over $M$ and $\odot$ denotes the symmetric tensor product. It follows that any tangent space $T_xM$ is identified with the space of homogeneous polynomials of degree $k$ in two variables. The natural action of $GL(2,\R)$ on $S$ extends to the irreducible action on $TM$ and reduces the full frame bundle to a $GL(2,\R)$-bundle. Therefore the paraconformal structures are sometimes called $GL(2,\R)$-geometries. We refer to \cite{B,DT} for more detailed descriptions of the paraconformal structures.

A paraconformal structure defines the following cone 
$$
C(x)=\{v\odot\cdots\odot v\ |\ v\in S(x)\}\subset T_xM
$$
at each point $x\in M$ and it is an easy exercise to show that the field of cones $x\mapsto C(x)$ defines the paraconformal structure uniquely. If a basis $e_0,e_1$ in $S(x)$ is chosen then any $v\in S(x)$ can be written as $v=se_0+te_1$ and then
$$
C(x)=\{s^kV_0+s^{k-1}tV_1+\cdots+t^kV_k\ |\ (s,t)\in\R^2\}
$$
where $V_i=\binom{k}{i}e_0^{\odot k-i}\odot e_1^{\odot i}$. We shall denote
$$
V(s,t)=s^kV_0+s^{k-1}tV_1+\cdots+t^kV_k
$$
and refer to the vectors as null vectors. The cone $C(x)$ defines a rational normal curve $(s:t)\mapsto \R V(s,t)$ of degree $k$ in the projective space $P(T_xM)$. Sometimes, for convenience, we will use an affine parameter $t=(1:t)$ and denote $V(t)=V_0+tV_1+\cdots+t^kV_k$. Derivatives of $V(t)$ with respect to $t$ will be denoted $V'(t)$, $V''(t)$ etc. Let us stress that the parameter $t$ depends on the choice of a basis in $S$. However we shall use it in order to have a convenient description of the paraconformal structure.

We will consider connections $\nabla$ on $M$ which are compatible with the projective structure in a sense that the parallel transport preserves the null vectors i.e. it preserves the field of cones $x\mapsto C(x)$. Precisely we have
\begin{definition}
A connection $\nabla$ is called \emph{paraconformal} for a given paraconformal structure $x\mapsto C(x)$ on a manifold $M$ if 
$$
\nabla_YV(t)\in\spn\{V(t),V'(t)\}
$$
for any $t\in\R$ and any vector field $Y$ on $M$.
\end{definition}

From the point of view of $GL(2,\R)$-structures, the connections satisfying the above condition are in a one to one correspondence with the principal $GL(2,\R)$-connections.

We are interested in the properties of the geodesics of $\nabla$. Therefore, at least at this point, we will not impose any additional assumptions on the torsion of a connection. Let us only remark here that in low dimensions ($k=1,2$) there are plenty of torsion-free connections adapted to a paraconformal structure. On the other hand, already in the case $k=3$ any connection adapted to a \emph{generic} paraconformal structure has a torsion but in the most interesting case related to ODEs there is a unique torsion-free connection \cite{B}. 

Let us fix a point $x\in M$. We define the following 1-parameter family of $i$-dimensional subspaces of $T_xM$ for any number $i\in\{1,\ldots,k\}$
\begin{equation}\label{eq_Vi}
\VV_i(t)(x)=\spn\{V(t)(x),V'(t)(x),V''(t)(x),\ldots,V^{(i-1)}(x)\}.
\end{equation}
The family $\{\VV_i(t)(x)\ |\ t\in\R,\ x\in M\}$, for any $i$, is canonically defined by the paraconformal structure itself, although the choice of the parameter $t$ is not canonical. The hyperplanes $\VV_k(t)(x)$ will be referred to as $\alpha$-planes of the structure. In what follows we will consider paraconformal structures with an adapted connection such that the $\alpha$-planes are tangent to totally geodesic submanifolds of $M$. Two problems arise. First of all, the subspaces $\VV_k$ have to be tangent to submanifolds of $M$ and this issue does not depend on $\nabla$. The second problem is how to make a submanifold totally geodesic with respect to some connection. We will show that there are obstructions for the existence of such connections. In terms of ODEs the obstructions are expressed by new point invariants.

In order to guarantee the integrability of $\VV_k$ we shall consider the following notions
\begin{definition}
A co-dimension one submanifold $N\subset M$ is called an \emph{$\alpha$-submanifold} of a paraconformal structure on $M$ if all tangent spaces $T_xN$, $x\in N$, are $\alpha$-planes of the paraconformal structure.
A paraconformal structure is \emph{$\alpha$-integrable} if any $\alpha$-plane is tangent to some $\alpha$-submanifold of $M$. 
\end{definition}
In the next section we shall prove that all $\alpha$-integrable paraconformal structures can be defined in terms of special ODEs.

\section{ODEs and paraconformal structures}\label{sec_ODEinv}

Paraconformal structures can be constructed out of ODEs. We will consider ODEs in the following form
$$
x^{(k+1)}=F(t,x,x',\ldots,x^{(k)}).\eqno{(F)}
$$
The following theorem is a compilation of results of Chern \cite{Ch}, Bryant \cite{B}, Dunajski and Tod \cite{DT} (see also \cite{FKN,GN,N}).
\begin{theorem}\label{thm1}
If the W\"unschmann invariants of $(F)$ vanish then the solution space of $(F)$ possesses a canonical paraconformal structure.
\end{theorem}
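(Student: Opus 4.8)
The plan is to build the structure from the Jacobi fields of $(F)$ and to reduce the existence of the field of rational normal curves to a classical statement about the projective type of an associated linear equation. Writing $s$ for the independent variable, the general solution of $(F)$ depends on $k+1$ integration constants, so the solution space $M$ is a $(k+1)$-dimensional manifold whose points $\sigma$ are single solutions $s\mapsto\sigma(s)$. Linearising $(F)$ along $\sigma$ yields a linear equation $L_\sigma[y]=y^{(k+1)}-\sum_{i=0}^{k}a_i(s)y^{(i)}=0$ with $a_i=\partial F/\partial x^{(i)}$ evaluated on $\sigma$, and I would identify $T_\sigma M$ with the $(k+1)$-dimensional space of its solutions, the Jacobi fields, via the standard correspondence between variations of $\sigma$ through solutions and solutions of $L_\sigma$.

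Next I would write down the candidate field of cones. For each value $t$ of the independent variable, let $V(t)\in T_\sigma M$ be the Jacobi field that vanishes to order $k$ at $s=t$, normalised so that its $k$-th $s$-derivative equals $1$ there; equivalently $V(t)$ is the projection to $M$ of the vertical contact direction $\partial_{p_k}$ taken at the point of the prolonged solution where the independent variable equals $t$, so that differentiation in $t$ corresponds to Lie differentiation along the total derivative field $D$. In the flat model $x^{(k+1)}=0$ one has $V(t)\propto(s-t)^{k}$ and $t\mapsto\R V(t)$ is the Veronese curve; this is the cone one wants in general. A direct computation near $s=t$ (equivalently, the bracket structure of $\partial_{p_k}$ and $D$) shows that $V(t),V'(t),\ldots,V^{(k)}(t)$, with primes denoting $t$-derivatives, are linearly independent for every $t$. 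Hence the osculating flag is complete, it coincides with the spaces $\VV_i(t)$ of \eqref{eq_Vi}, and this part requires no hypothesis on $F$.

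The substance of the theorem is that $t\mapsto\R V(t)$ is a \emph{rational normal} curve, and this is where the W\"unschmann invariants enter. Since $V(t),\ldots,V^{(k)}(t)$ is a basis of the fixed space $T_\sigma M$, the curve satisfies a linear equation $V^{(k+1)}(t)=\sum_{i=0}^{k}b_i(t)V^{(i)}(t)$ whose coefficients are built from $F$ and its derivatives along $D$. By the classical theory of curves in projective space, the projective type of a non-degenerate curve is encoded by the Wilczynski relative invariants of this equation, and the curve is rational normal exactly when a reparametrisation of $t$ together with a rescaling of $V$ brings the equation to the canonical form $V^{(k+1)}=0$; the solutions are then polynomials, $t\mapsto\R V(t)$ is the Veronese curve, and its symmetries realise the irreducible $GL(2,\R)$-action on $T_\sigma M$, matching the definition of a paraconformal structure. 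I would then identify these relative invariants with the W\"unschmann invariants of $(F)$; by hypothesis they vanish, the reduction succeeds, and each $T_\sigma M$ carries a rational normal curve. Finally I would check that $V(t)$ depends smoothly on $\sigma$ and that the construction is independent of the auxiliary choices up to the $GL(2,\R)$-action, so that the field of cones is a genuine, canonically defined paraconformal structure, i.e.\ an isomorphism $TM\simeq S\odot\cdots\odot S$.

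I expect the main obstacle to be this last identification: expressing the intrinsic Wilczynski invariants of the linearised curve as the point invariants of the nonlinear equation $(F)$, while correctly accounting for the reparametrisation freedom in $t$ and the rescaling of $V$. This is exactly the computation underlying \cite{Db,DT,GN,N}, and it is what makes the vanishing of the W\"unschmann invariants the sharp obstruction rather than merely a sufficient condition.
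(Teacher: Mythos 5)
Your proposal is correct and follows essentially the same route as the paper: the paper also realises $V(t)$ as the projection of the vertical direction $\partial_k$ along the lifted solution (with $t$-differentiation given by Lie brackets with $X_F$), and its proof is exactly the Halphen--Wilczynski normal-form argument — using the rescaling $g$ and reparametrisation $f$ to achieve $\ad_{fX_F}^{k+1}(g\partial_k)=0 \bmod \XX_F$, so that $g\partial_k$ depends polynomially on the parameter and projects to a rational normal curve. Your "Jacobi field" packaging is the linearisation viewpoint of Doubrov, which the paper explicitly notes yields the same invariants (c.f.\ \cite{Db,Db2,DT}), so the identification step you flag as the main obstacle is handled in the paper essentially by definition of the $W_i$ via the coefficients $L_i$, with \cite{K1} and \cite{DT} cited for the supporting facts.
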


To explain the meaning of the theorem and give an insight into its proof we recall that the geometry of an ODE of order $k+1$ is described on a manifold of $k$-jets, denoted $J^k(\R,\R)$. There is a canonical projection $\pi$ from $J^k(\R,\R)$ to the solution space $M_F$ with one-dimensional fibre tangent to the total derivative vector field
$$
X_F=\partial_t+x_1\partial_0+x_2\partial_1+\cdots+F\partial_k,
$$
where $t,x_0,x_1,\ldots,x_k$ are standard coordinates on the space of jets and $\partial_i=\frac{\partial}{\partial x_i}$. It follows that $M_F=J^k(\R,\R)/X_F$. The term \emph{canonical} in the theorem means that the null vectors of the paraconformal structure are tangent to $\pi_*\partial_{k}$. An equation of order $k+1$ has $k-1$ W\"unschmann invariants (or strictly speaking relative invariants). In particular there are no invariants for equations of order 2. There is one invariant in order three. This invariant was originally defined by W\"unschmann \cite{Wu} and later used by Chern \cite{Ch}. The two invariants in order four were introduced by Bryant \cite{B}. The general case was treated by Dunajski and Tod \cite{DT}. We use the name W\"unschmann invariants in all cases for convenience and because all invariants have similar nature. Actually, in the linear case, all of them were defined already by Wilczynski \cite{W}. Doubrov \cite{Db,Db2} generalised the Wilczynski invariants to non-linear case by computing Wilczynski invariants for the linearised equation. It appears that this procedure also gives the W\"unschmann invariants, c.f. \cite{DT}. In what follows we will sometimes say that the W\"unschmann condition (or Bryant condition in the case of order 4) holds if all W\"unschmann invariants (or equivalently the generalised Wilczynski invariants of Doubrov) vanish.

In the present paper the following approach to the W\"unschmann invariants will be useful. One looks for sections of $\VV=\spn\{\partial_k\}$ and $\XX_F=\spn\{X_F\}$, which are necessarily of the form $g\partial_k$ and $fX_F$ for some functions $f$ and $g$, and imposes the condition
\begin{equation}\label{eq_a}
\ad_{fX_F}^{k+1}g\partial_k=0\mod g\partial_k,\ad_{fX_F}g\partial_k,\ad_{fX_F}^2g\partial_k,\ldots,\ad_{fX_F}^{k-2}g\partial_k,X_F,
\end{equation}
where $\ad_XY=[X,Y]$ is the Lie bracket of vector fields and $\ad_X^{i+1}Y=[X,\ad_X^iY]$. One can prove that such $f$ and $g$ always exist (see Proposition 4.1 \cite{K1}) and then 
$$
\ad_{fX_F}^{k+1}g\partial_k=L_0g\partial_k+L_1\ad_{fX_F}g\partial_k+\ldots,+L_{k-2}\ad_{fX_F}^{k-2}g\partial_k\mod X_F.
$$
for some coefficients $L_i$. Then, there exist rational numbers $c_{ij}\in\Q$ such that the W\"unschmann invariants are given by the formulae
$$
W_i=L_i+\sum_{j>i}c_{ij}(fX_F)^{j-i}(L_j).
$$
In particular the vanishing of all $W_i$ is equivalent to the vanishing of all $L_i$. The construction described above is a non-linear version of a construction of the Halphen normal form and reproduces the Wilczynski invariants for linear equations \cite{W}. Moreover, if the W\"unschmann invariants vanish then
\begin{equation}\label{eq_wun}
\ad_{fX_F}^{k+1}g\partial_k=0\mod \XX_F
\end{equation}
and it follows that $g\partial_k$ depends polynomially on a parameter on integral curves of $\XX_F$. It implies that the projection of $g\partial_k$ to the solution space $J^k(\R,\R)/X_F$ defines a field of rational normal curves in $P(TM_F)$. This completes a sketch of the proof of Theorem \ref{thm1}. The theorem can be strengthen to the following theorem
\begin{theorem}\label{thm1b}
If the W\"unschmann invariants of $(F)$ vanish then the corresponding paraconformal structure on the solution space is $\alpha$-integrable. Conversely, all $\alpha$-integrable paraconformal structures can be locally obtained in this way. 
\end{theorem}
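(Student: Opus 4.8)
The plan is to prove both implications by identifying the $\alpha$-submanifolds with the hypersurfaces of solutions passing through a fixed point of the plane. For the forward direction, assume the W\"unschmann invariants vanish, so that by Theorem \ref{thm1} the solution space $M_F$ carries the paraconformal structure whose null directions are $\pi_*\partial_k$. First I would reinterpret the tangent space $T_{[\gamma]}M_F$ at a solution $\gamma$ as the space of Jacobi fields along $\gamma$, i.e.\ solutions $J$ of the linearisation of $(F)$; this is the standard identification of the tangent to the solution space, and such a $J$ is determined by its $k$-jet $(J,J',\ldots,J^{(k)})$ at any time. Under this identification the null vector $V(t)$ corresponds to the unique (up to scale) Jacobi field with a $k$-fold zero at the base-time $t$, since $\pi_*\partial_k$ over the jet at $t$ is exactly the variation that fixes $x_0,\ldots,x_{k-1}$ and moves $x_k$.

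The key computation is then to show that for each fixed $t$ one has
\[
\VV_k(t)=\{\,J\ |\ J(t)=0\,\},
\]
the kernel of the evaluation functional $J\mapsto J(t)$, which is the tangent space to the hypersurface of all solutions passing through the point $(t,\gamma(t))$. Writing $W(s,t)$ for the value at time $s$ of the Jacobi field representing $V(t)$, the $k$-fold zero means $\partial_s^jW(s,t)|_{s=t}=0$ for $j=0,\ldots,k-1$. A short induction on $n$, differentiating these relations in $t$ and discarding terms already known to vanish, gives $\partial_s^m\partial_t^nW(s,t)|_{s=t}=0$ for all $m+n\le k-1$. The case $m=0$ states that the Jacobi field $V^{(n)}(t)$ vanishes at $s=t$ for $n=0,\ldots,k-1$, so $V(t),V'(t),\ldots,V^{(k-1)}(t)$ all lie in the kernel of $J\mapsto J(t)$; as both spaces have dimension $k$ (non-degeneracy of the rational normal curve on one side, codimension one of the evaluation on the other) they coincide. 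Hence every $\alpha$-plane is tangent to an $\alpha$-submanifold and the structure is $\alpha$-integrable. In the bracket language of \eqref{eq_a}--\eqref{eq_wun} the fields $V^{(n)}(t)$ are the projections $\pi_*\ad_{fX_F}^n(g\partial_k)$ along the fibre over $t$, so the same statement can be phrased directly on $J^k(\R,\R)$.

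For the converse I would run the correspondence backwards. Given an $\alpha$-integrable paraconformal structure on $M$, the $\alpha$-submanifolds foliate $M$ in a one-parameter family through each point, and locally their leaf space $Z$ is a surface; together with the incidence space $\CC=\{(x,N)\ |\ x\in N\}$ this yields a double fibration $M\leftarrow\CC\to Z$ with fibres of dimension $1$ over $M$ and $k$ over $Z$. Each point $x\in M$ determines the rational curve $C_x\subset Z$ of all $\alpha$-submanifolds through $x$, and $\{C_x\}_{x\in M}$ is a $(k+1)$-parameter family of curves in $Z$: through a given point $z_0\in Z$ pass exactly the curves $C_x$ with $x$ in the leaf corresponding to $z_0$, a $k$-dimensional family, as for an equation of order $k+1$. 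Choosing coordinates $(t,x)$ on $Z$ and writing the curves as graphs, the plan is to show this family is the general solution of an equation $(F)$: prescribing the value and first $k$ derivatives of $C_x$ at a point of $Z$ should determine $x$ uniquely, so that the $k$-jet map $M\to J^k(\R,\R)$ is a local diffeomorphism and $F$ is read off from the $(k+1)$-st derivative. Finally I would verify that the structure reconstructed from $(F)$ coincides with the original one, which forces the null directions to depend polynomially along the fibres of $\pi$ and hence the W\"unschmann invariants to vanish.

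The main obstacle is the non-degeneracy step in the converse: showing that $\{C_x\}$ is the \emph{complete} solution family of an equation of order exactly $k+1$, equivalently that the $k$-jet of $C_x$ at a point of $Z$ pins down $x$. This is precisely where the rational-normal-curve condition enters, through the nested osculating flag $\VV_1(t)\subset\cdots\subset\VV_k(t)$: across the correspondence, imposing contact of successively higher order with a fixed curve cuts out submanifolds of $M$ of successively lower dimension, the full $k$-jet leaving a single point. Once this transversality is in place the remaining steps are routine bookkeeping on the double fibration, and the matching of the two paraconformal structures follows from the identification of null directions established in the forward direction.
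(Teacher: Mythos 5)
Your forward direction is correct, and it is the paper's own argument in different clothing. The paper projects the integrable distribution $\DD_k$ of \eqref{eq_Dk} (tangent to the fibres of $J^k(\R,\R)\to J^0(\R,\R)$) to the solution space; its leaves are precisely your hypersurfaces of solutions passing through a fixed point $(t_0,x_0)$, and your Jacobi-field computation $\partial_s^m\partial_t^nW(s,t)|_{s=t}=0$ for $m+n\le k-1$ is a clean substitute for the paper's identification of $\pi_*\DD_k$ with the osculating spaces \eqref{eq_Vi}. (Note only that the time parameter need not be the affine parameter of the rational normal curve, but since osculating flags are invariant under reparametrisation and rescaling, your span coincides with $\VV_k(t)$; and linear independence of $V(t),\ldots,V^{(k-1)}(t)$ does use the non-degeneracy coming from the W\"unschmann condition.)

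The converse, however, has a genuine gap, and it is exactly the step you yourself flag as ``the main obstacle.'' Showing that the $k$-jet of $C_x$ at a point of $Z$ pins down $x$ --- equivalently, that the loci of curves with contact of order $j$ are submanifolds whose dimension drops by exactly one with each additional order of contact --- is the mathematical core of the theorem, and the remark that ``the rational-normal-curve condition enters through the osculating flag'' does not prove it: the flag gives only the pointwise infinitesimal statement, and a priori the $j$-jet maps need not even have constant rank in the curved setting. The paper resolves precisely this on the incidence space $P(C)$ (your $\CC$): it forms the flag \eqref{flag}, takes $\FF_{k-1}$ to be the tangent distribution of the foliation by lifted $\alpha$-submanifolds, inductively defines corank-one subdistributions $\FF_{i-1}\subset\ad_\XX^{i-1}\VV$, proves each $\FF_{i-1}$ is integrable via the Jacobi identity, deduces that \eqref{flag} is a regular Goursat flag, and only then invokes the Goursat-flag/equation-type results of \cite{K1}, \cite{A}, \cite{MZ} to conclude that $(\XX,\VV)$ is locally the Cartan pair on $J^k(\R,\R)$, i.e.\ that an ODE of order $k+1$ exists at all. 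Your nested contact loci are exactly the shadows of the leaves of these $\FF_i$, but without establishing their existence and integrability (or an equivalent substitute) the dimension count you rely on is unsupported, so the converse as written is a plan rather than a proof. A smaller shared point: even the existence of the two-dimensional leaf space $Z$ requires that the lifted $\alpha$-submanifolds foliate $P(C)$ (local uniqueness of the $\alpha$-submanifold tangent to a given $\alpha$-plane), which should be stated rather than assumed.
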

\begin{proof}
Let
\begin{equation}\label{eq_Dk}
\DD_k=\spn\{\partial_k,\partial_{k-1},\ldots,\partial_{1}\}
\end{equation}
be an integrable corank 2 distribution on the space of jets $J^k(\R,\R)$. Note that $\DD_k$ is tangent to the fibres of the projection $J^k(\R,\R)\to J^0(\R,\R)$. Equivalently
$$
\DD_k=\spn\{g\partial_k,\ad_{fX_F}g\partial_k,\ad_{fX_F}^2g\partial_k,\ldots,\ad_{fX_F}^{k-1}g\partial_k \}\mod X_F,
$$
for arbitrary nowhere vanishing functions $f$ and $g$. In particular one can take $f$ and $g$ as in \eqref{eq_wun}. Then, it follows from the construction presented above that the projection of $\DD_k$ to the solution space $J^k(\R,\R)/X_F$ gives a 2-parameter family of $\alpha$-submanifolds and any $\alpha$-plane of the paraconformal structure is tangent to some submanifold from this family. Indeed, the null directions of the paraconformal structure are defined by $\pi_*g\partial_k$ and the manifold $M_F$ is the quotient space $J^k(\R,\R)/X_k$. Thus, $\pi_*\DD_k$ are of the form \eqref{eq_Vi}, where as the parameter $t$ one takes a parametrisation of integral lines of $fX_F$. Therefore, the structure is $\alpha$-integrable.

In order to prove the second part we shall use \cite{K1}. First of all we associate to a paraconformal structure a pair of  distributions $(\XX,\VV)$ on the fibre bundle $P(C)$ over $M$, where $P(C)$ is the projectivisation of the null cone of the paraconformal structure. We define $\XX$ as the distribution tangent to the fibres of $P(C)$. Thus $\rk\,\XX=1$. $\VV$ is the tautological distribution on $P(C)\subset P(TM)$. Then $\rk\,\VV=2$ and $\XX\subset\VV$. According to \cite{K1} it is sufficient to prove that the pair $(\XX,\VV)$ is of equation type, i.e. $\VV$ is locally diffeomorphic to the Cartan distribution on $J^k(\R,\R)$. We first note that due to the fact that $C(x)$ is a rational normal curve of degree $k$ the pair $(\XX,\VV)$ defines the following flag
\begin{equation}\label{flag}
\VV\subset\ad_\XX\VV\subset\ldots\subset\ad^{k-1}_\XX\VV\subset\ad^k_\XX\VV=TP(C)
\end{equation}
where $\rk\,\ad^i_\XX\VV=i+2$ and for two distributions $\mathcal{Y}_1$ and $\mathcal{Y}_2$ we define
$$
[\mathcal{Y}_1,\mathcal{Y}_2]=\spn\{[Y_1,Y_2]\ |\ Y_1\in\Gamma(\mathcal{Y}_1),\ Y_2\in\Gamma(\mathcal{Y}_2)\}
$$
and then inductively $\ad^{i+1}_{\mathcal{Y}_1}\mathcal{Y}_2=[\mathcal{Y}_1,\ad^i_{\mathcal{Y}_1}\mathcal{Y}_2]$
(c.f. \cite{K1}). In terms of \eqref{eq_Vi}
$$
(\ad^i_\XX\VV)(x,t)=\pi_*^{-1}(\VV_{i+1}(x)(t))
$$
where $\pi\colon P(C)\to M$ is the projection, $x\in M$ and $t\in P(C)(x)$ is an affine coordinate. Further, since $P(C)$ parametrises all $\alpha$-planes, the $\alpha$-integrability of the structure gives a foliation of $P(C)$ of co-dimension 2. The tangent bundle of the foliation is an integrable sub-distribution $\FF\subset \ad^{k-1}_\XX\VV$. We set $\FF_{k-1}=\FF$ and define
$$
\FF_{i-1}=\spn\{Y\in\Gamma(\FF_i)\ |\ [X,Y]\in\Gamma(\ad_\XX^i\VV),\ X\in\Gamma(\XX)\},\qquad i=k-1,\ldots,1.
$$
Then $\FF_{i-1}$ is a sub-distribution of $\ad_\XX^{i-1}\VV$ of co-rank 1, because
$$
\ad_\XX^{i-1}\VV=\spn\{Y\in\Gamma(\ad^i_\XX\VV)\ |\ [X,Y]\in\Gamma(\ad_\XX^i\VV),\ X\in\Gamma(\XX)\}
$$
and $\FF$ is of co-rank 1 in $\ad_\XX^{k-1}\VV$.
We shall prove that $\FF_{i-1}$ is integrable provided that $\FF_i$ is integrable. Let $Y_1, Y_2\in\Gamma(\FF_{i-1})$. Then $[Y_1,Y_2]$ is a section of $\FF_i$ because $\FF_{i-1}\subset\FF_i$. Moreover, since $Y_j\in\Gamma(\ad_\XX^{i-1}\VV)$, $j=1,2$, we have $[X,Y_j]\in\Gamma(\ad^i_\XX\VV)$. Hence, $[X,Y_j]=Z_j+f_jX$, $j=1,2$, where $Z_j\in\Gamma(\FF_i)$ and $f_j$ is a function. The Jacobi identity reads
$$
[X,[Y_1,Y_2]]=[Z_1, Y_2]+[Y_1, Z_2]+f_1[X,Y_2]-f_2[X,Y_1]\mod \XX.
$$
The right hand side is a section of $\ad_\XX^i\VV$. Thus, $[Y_1,Y_2]$ is a section of $\FF_{i-1}$ and consequently $\FF_{i-1}$ is integrable.

We have proved that all $\ad^i_\XX\VV$ contain integrable, co-rank one sub-distributions $\FF_i$ such that $\ad^i_\XX\VV=\FF_i\oplus\XX$. It follows that $[\ad^i_\XX\VV,\ad^i_\XX\VV]=[\XX,\ad_\XX^i\VV]=\ad^{i+1}_\XX\VV$. Thus \eqref{flag} is a regular Goursat flag, c.f. \cite{A,MZ}, because of $\rk\,\ad^i_\XX\VV=i+2$. This completes the proof.
\end{proof}

\begin{remark}
Theorem \ref{thm1b} can be considered as a generalisation to higher dimensions of the 3-dimensional case \cite{Ch,FKN}. Indeed, in dimension 3 all paraconformal structures can be obtained from ODEs \cite{FKN} and in this dimension all paraconformal structures are $\alpha$-integrable. In higher dimensions one needs to assume that a paraconformal structure is $\alpha$-integrable in order to be defined by an ODE.
\end{remark}

The construction of the W\"unschmann invariants presented above can be split into two steps. One look first for a function $g$ and then for $f$. The first step already gives interesting results. Namely, \eqref{eq_a} can be weakened to
\begin{equation}\label{eq_b}
\ad_{X_F}^{k+1}g\partial_k=0\mod g\partial_k,\ad_{X_F}g\partial_k,\ad_{X_F}^2g\partial_k,\ldots,\ad_{X_F}^{k-1}g\partial_k
\end{equation}
and such $g$ always exists. This gives $k$ coefficients $K_0,K_1,\ldots,K_{k-1}$ defined by the formula
$$
\ad_{X_F}^{k+1}g\partial_k= -K_0g\partial_k+K_1\ad_{X_F}g\partial_k-K_2\ad_{X_F}^2g\partial_k+\ldots+(-1)^{k-1}K_{k-1}\ad_{X_F}^{k-1}g\partial_k
$$
(we add the minus signs for convenience). The coefficients, called curvatures in \cite{J,JK}, have well defined geometric meaning. They are invariant with respect to contact transformations that do not change the independent variable $t$. The class of transformations was called time-preserving contact transformations (or contact-affine transformations) in \cite{JK}. The class gives a natural framework in the context of control mechanical systems \cite{J,JK}, Finsler geometry (in this case $K_0$ is the flag curvature) and webs \cite{K2}. In the present paper we will use the invariants $K_i$ to write down more complicated objects in a simple form (compare \cite{G} in the case of second order). The curvatures $K_i$ can be explicitly computed in terms of the original equation $(F)$ using \cite[Proposition 2.9]{JK}. We will provide the formulae in the case of equations of order 2, 3 and 4 in Appendix \ref{ap_formulae}.

A function $g$ defined by \eqref{eq_b} is a non-trivial solution to
\begin{equation}\label{eq_g}
X_F(g)=\frac{g}{k+1}\partial_kF.
\end{equation}
We will use the notation
$$
V=g\partial_k.
$$

In the subsequent sections we will extensively use the Lie derivative $\mathcal{L}_{X_F}$ acting on different objects. If not mentioned otherwise the terms ``derivative'' or ``differentiation'' will refer to $\mathcal{L}_{X_F}$. Moreover, we will denote differentiations by adding primes to the objects. In particular we will have
$$
V'=\ad_{X_F}V,\quad V''=\ad_{X_F}^2V,\quad\ldots\quad, V^{(j)}=\ad_{X_F}^jV
$$
for the vector field $V$ or
$$
K_i'=X_F(K_i),\quad K_i''=X_F^2(K_i),\quad\ldots\quad, K_i^{(j)}=X_F^j(K_i)
$$
for the curvatures $K_i$.

\section{ODEs and connections}\label{sec_ODEcon}

We assume that an ODE $(F)$ defines a paraconformal structure on $M_F$ via Theorem~\ref{thm1}, i.e. all W\"unschmann invariants vanish. Let us introduce on the space of jets $J^k(\R,\R)$ the following integrable distributions (generalising \eqref{eq_Dk})
\begin{equation}\label{eq_Di}
\DD_i=\spn\{\partial_k,\partial_{k-1},\ldots,\partial_{k-i+1}\}=\spn\{V,V',\ldots,V^{(i-1)}\}
\end{equation}
 which are tangent to the fibres of the natural projections $J^k(\R,\R)\to J^{k-i}(\R,\R)$, for $i=1,\ldots,k$. The distributions can be projected to the solution space $M_F$. The projections give exactly the subspaces $\VV_i\subset TM_F$ defined before by formula \eqref{eq_Vi} for a paraconformal structure. Therefore, one can ask if the projection of leaves of $\DD_k$ defines a two-parameter family of totally geodesic hypersurfaces in $M$. If yes, then we shall consider ODEs up to point transformations, i.e. transformations of variables $t$ and $x$ only, because in terms of jets we get precisely contact transformations preserving $\DD_k$. It follows that there is a double fibration picture
\begin{equation}\label{diag_fibr}
M_F\longleftarrow J^k(\R,\R)\longrightarrow B
\end{equation}
where $B=J^0(\R,\R)$ is the space where $(F)$ is defined and $M_F$ is the solution space as before.
\begin{definition}
A class of point equivalent equations admits a \emph{totally geodesic paraconformal connection} if the projections of the integral manifolds of $\DD_k$ to the solution space $M_F$ are totally geodesic submanifolds with respect to a paraconformal connection on $M_F$.
\end{definition}
 
In order to construct a paraconformal connection on $M_F$ we will construct a connection on $J^k(\R,\R)$ which is ``invariant'' along $\XX_F$ and then we will project it to $M_F$. Precisely, if $\nabla$ is a connection on $J^k(\R,\R)$ then we would like to define a connection $\tilde \nabla$ on $M_F$ by the formula
\begin{equation}\label{proj_nabla}
\tilde\nabla_{Y_1}Y_2=\pi_*\nabla_{\pi_*^{-1}Y_1}\pi_*^{-1}Y_2.
\end{equation}
The definition is correct only for special $\nabla$. There are two difficulties. Firstly, the lifts $\pi_*^{-1}Y_i$ are given modulo $\XX_F$ only. Secondly, $\nabla$ may depends on a point in the fibre of $\pi$.  To overcome the difficulties we need several additional conditions.

\begin{lemma}\label{lemma0}
A connection $\nabla$ on $J^k(\R,\R)$ defines a connection $\tilde \nabla$ on $M_F$ via \eqref{proj_nabla} if and only if
\begin{enumerate}
\item $\nabla_YX=0\mod \XX_F$,
\item $\nabla_{X}Y=[X,Y]\mod \XX_F$,
\item $\mathcal{L}_X\nabla Y=\nabla [X,Y]\mod \Omega^1(J^k(\R,\R))\otimes\XX_F$,
\end{enumerate}
where $X$ is an arbitrary section of $\XX_F$ and $Y$ is an arbitrary vector field on $J^k(\R,\R)$.
\end{lemma}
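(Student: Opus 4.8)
The plan is to prove Lemma \ref{lemma0} by carefully analysing when the formula \eqref{proj_nabla} is well-defined, treating separately the two sources of ambiguity mentioned in the text: the lift $\pi_*^{-1}Y_i$ being defined only modulo $\XX_F$, and the possible dependence of $\nabla$ on the fibre coordinate. First I would fix notation: write $\pi\colon J^k(\R,\R)\to M_F$ with one-dimensional fibre spanned by $X\in\Gamma(\XX_F)$, and for a vector field $Y$ on $M_F$ choose any lift $\hat Y$ with $\pi_*\hat Y=Y$, noting that any two lifts differ by a section of $\XX_F$. The claim to establish is that $\pi_*\nabla_{\hat Y_1}\hat Y_2$ is a well-defined vector field on $M_F$, i.e. it is (a) independent of the choice of lifts, and (b) projectable, meaning constant along the fibres so that its pushforward makes sense.

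\textbf{Independence of the lift.} The two difficulties correspond exactly to conditions (1) and (2). For the lift of the first argument $Y_1$: replacing $\hat Y_1$ by $\hat Y_1+\varphi X$ for a function $\varphi$ changes $\nabla_{\hat Y_1}\hat Y_2$ by $\varphi\,\nabla_X\hat Y_2$, and for the pushforward to be unaffected we need $\pi_*\nabla_X\hat Y_2=0$, i.e. $\nabla_X\hat Y_2=0\bmod\XX_F$. But condition (2) says $\nabla_X Y=[X,Y]\bmod\XX_F$, and since $\hat Y_2$ projects to a well-defined field on $M_F$ its bracket with $X$ lies in $\XX_F$ (projectability of $\hat Y_2$), so indeed $\nabla_X\hat Y_2\in\XX_F$; this handles the first-argument ambiguity. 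For the lift of the second argument: replacing $\hat Y_2$ by $\hat Y_2+\psi X$ changes $\nabla_{\hat Y_1}\hat Y_2$ by $\hat Y_1(\psi)X+\psi\,\nabla_{\hat Y_1}X$, whose projection is $\psi\,\pi_*\nabla_{\hat Y_1}X$; this vanishes precisely when $\nabla_Y X=0\bmod\XX_F$, which is condition (1). Thus conditions (1) and (2) together are equivalent to the independence of $\pi_*\nabla_{\hat Y_1}\hat Y_2$ from the choice of lifts.

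\textbf{Projectability along the fibre.} It remains to show the resulting field is constant along fibres, equivalently that $\mathcal{L}_X(\pi_*\nabla_{\hat Y_1}\hat Y_2)=0$ for $X\in\Gamma(\XX_F)$; since pushforward commutes with the Lie derivative along the fibre direction when the objects are lift-invariant, this reduces to computing $\mathcal{L}_X(\nabla_{\hat Y_1}\hat Y_2)$ modulo $\XX_F$. Here I would use the standard identity expressing the Lie derivative of a covariant derivative, $\mathcal{L}_X(\nabla_{\hat Y_1}\hat Y_2)=(\mathcal{L}_X\nabla)(\hat Y_1,\hat Y_2)+\nabla_{[X,\hat Y_1]}\hat Y_2+\nabla_{\hat Y_1}[X,\hat Y_2]$. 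Because $\hat Y_1,\hat Y_2$ are projectable, $[X,\hat Y_i]\in\Gamma(\XX_F)$, so by condition (1) the term $\nabla_{[X,\hat Y_1]}\hat Y_2$ vanishes mod $\XX_F$, and by the already-established lift-independence the last term is controlled. The remaining obstruction is exactly $(\mathcal{L}_X\nabla)(\hat Y_1,\hat Y_2)$, which condition (3) forces to equal $\nabla_{[X,\hat Y_1]}\hat Y_2$ modulo $\Omega^1\otimes\XX_F$, cancelling the surviving contribution. This shows the field descends.

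The main obstacle I anticipate is bookkeeping the three conditions so they interlock correctly: each of (1),(2),(3) is both necessary and sufficient for a distinct well-definedness requirement, and I would need to verify the converse directions as well, namely that if \eqref{proj_nabla} gives a genuine connection then (1)--(3) must hold. The converses follow by running the same computations in reverse — choosing test lifts differing by $\varphi X$ and $\psi X$ exposes conditions (1) and (2) as forced, and the fibre-constancy computation exposes (3) — but care is required because (3) is only an equality modulo $\Omega^1(J^k(\R,\R))\otimes\XX_F$, so I would phrase everything as identities of vector fields taken modulo $\XX_F$ and only at the end push forward by $\pi_*$, which annihilates $\XX_F$ and thereby makes the ``mod $\XX_F$'' slack harmless. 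Finally I would check that $\tilde\nabla$ so defined satisfies the connection axioms (tensoriality in the first slot and the Leibniz rule in the second), which is routine once well-definedness is in hand.
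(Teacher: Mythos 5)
Your proof is correct and follows essentially the same route as the paper's: conditions (1)--(2) are identified with lift-independence of \eqref{proj_nabla}, and condition (3), combined with the first two, is identified with projectability along the fibres of $\pi$ (which the paper phrases as $\mathcal{L}_X\nabla Y=0 \mod \XX_F$ for lifts $Y$), your only addition being the explicit Lie-derivative identity for $\mathcal{L}_X(\nabla_{\hat Y_1}\hat Y_2)$. One cosmetic slip: the term $\nabla_{[X,\hat Y_1]}\hat Y_2$ is killed by condition (2) (i.e.\ first-slot lift-independence), not by condition (1), but since your own lift-independence argument already controls it, this does not affect the proof.
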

\begin{proof}
The first two conditions are equivalent to the fact that $\nabla_{\pi_*^{-1}Y_1}\pi_*^{-1}Y_2 \mod\XX_F$ does not depend on the lift of $Y_1$ or $Y_2$ to $J^k(\R,\R)$. The third condition (together with the first one) is equivalent to the fact that $\mathcal{L}_X\nabla Y=0\mod\XX_F$ for $Y$ being a lift of a vector field on $M_F$. It means that $\pi_*\nabla Y$ is well defined independently on the point in the fibre of $\pi$, hence defines a connection on $M_F$.
\end{proof}

If we assume that equation $(F)$ satisfies the W\"unschmann condition and a connection $\nabla$ on $J^1(\R,\R)$ satisfies the three conditions given in Lemma \ref{lemma0}, then the connection $\tilde \nabla$ on $M_F$ will be compatible with the paraconformal structure on $M_F$ defined by $(F)$ if and only if 
\begin{equation}\label{cond_nabla}
\nabla V=\alpha V+\beta V',
\end{equation}
for some two one-forms $\alpha$ and $\beta$ on $J^k(\R,\R)$.

\begin{lemma}\label{lemma1}
The one-forms $\alpha$ and $\beta$ satisfy the following system of differential equations
\begin{eqnarray}
&&\alpha'+k\beta''=0,\nonumber\\
&&\left(\binom{k+1}{j-1}-\frac{k}{2}\binom{k+1}{j}\right)\beta^{(k-j+2)} +(-1)^{j+1}K_j'\beta+(-1)^{j+1}(k-j+1)K_j\beta'\nonumber \\ 
&&\qquad=(-1)^{j+1}dK_j +\sum_{l=j+1}^{k-1}(-1)^{l+1} \left(\binom{l}{j-1}-\frac{k}{2}\binom{l}{j}\right)K_l\beta^{(l-j+1)}\label{system}
\end{eqnarray} 
for $j=0,\ldots,k-1$.
\end{lemma}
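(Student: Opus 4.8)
The plan is to propagate the compatibility relation \eqref{cond_nabla} along $X_F$ by means of condition (3) of Lemma~\ref{lemma0}, and then to confront the result with the relation defining the curvatures $K_i$. Throughout I take for granted that $\nabla$ satisfies the three conditions of Lemma~\ref{lemma0} together with \eqref{cond_nabla}; it is condition (3) that drives the computation.

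First I would iterate condition (3). Taking $X=X_F$, $Y=V^{(j)}$ and using $[X_F,V^{(j)}]=V^{(j+1)}$ turns it into the recursion
\[
\nabla V^{(j+1)}=\mathcal{L}_{X_F}\big(\nabla V^{(j)}\big)\mod\XX_F .
\]
Starting from $\nabla V=\alpha V+\beta V'$ and applying $\mathcal{L}_{X_F}$ via the Leibniz rule (recall $\mathcal{L}_{X_F}V^{(m)}=V^{(m+1)}$, and that primes denote $\mathcal{L}_{X_F}$), an induction using Pascal's rule gives the closed form
\[
\nabla V^{(j)}=\sum_{m=0}^{j}\binom{j}{m}\alpha^{(j-m)}V^{(m)}+\sum_{m=0}^{j}\binom{j}{m}\beta^{(j-m)}V^{(m+1)}\mod\XX_F .
\]
Because the W\"unschmann condition holds, $\{V,V',\dots,V^{(k)}\}$ projects to a frame of $TM_F$, so every $\nabla V^{(j)}$ is expressed in this fixed frame once the out-of-range vectors are reduced.

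The decisive step is to compute $\nabla V^{(k+1)}$ in two ways, which condition (3) forces to agree. On one side, the closed form with $j=k+1$ contains $V^{(k+1)}$ and $V^{(k+2)}$, which I would reduce by
\[
V^{(k+1)}=\sum_{i=0}^{k-1}(-1)^{i+1}K_iV^{(i)},\qquad V^{(k+2)}=\sum_{i=0}^{k-1}(-1)^{i+1}\big(K_i'V^{(i)}+K_iV^{(i+1)}\big).
\]
On the other side, applying $\nabla$ directly to the first of these relations gives
\[
\nabla V^{(k+1)}=\sum_{i=0}^{k-1}(-1)^{i+1}\big(dK_i\,V^{(i)}+K_i\,\nabla V^{(i)}\big),
\]
into which I would substitute the closed form for $\nabla V^{(i)}$, $i\le k-1$. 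Equating the two expressions modulo $\XX_F$ and matching the coefficient of each $V^{(m)}$, $m=0,\dots,k$, yields $k+1$ scalar one-form identities.

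Finally I would read off \eqref{system}. The coefficient of the top vector $V^{(k)}$ is special: all curvature contributions cancel and there remains only a relation between $\alpha'$ and $\beta''$, namely the first equation. It expresses each $\alpha^{(r)}$ with $r\ge1$ as a fixed multiple of $\beta^{(r+1)}$; substituting it into the coefficients of $V^{(j)}$, $j=0,\dots,k-1$, eliminates $\alpha$ and fuses the $\binom{\cdot}{j}\alpha^{(\cdot)}$ terms with the $\binom{\cdot}{j-1}\beta^{(\cdot)}$ terms into precisely the combinations $\binom{\cdot}{j-1}-\frac{k}{2}\binom{\cdot}{j}$ of \eqref{system} (this is exactly what fixes the coefficient in the first equation), while the $dK_j$, the $K_j'\beta$ and the $(k-j+1)K_j\beta'$ terms arise respectively from the $dK_i$ term, from the reduction of $V^{(k+2)}$, and from the reduction of $V^{(k+1)}$ combined with the $\nabla V^{(j)}$-contribution. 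The main obstacle is precisely this bookkeeping: keeping the signs $(-1)^{i+1}$ and the binomial coefficients straight, correctly attributing the contributions of the reduced vectors $V^{(k+1)}$ and $V^{(k+2)}$, and verifying that the bare $\alpha$ and the $K_{j-1}\beta$ terms cancel between the two sides so that only \eqref{system} survives.
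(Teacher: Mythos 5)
Your proposal is correct and takes essentially the same route as the paper's own proof: iterating condition (3) of Lemma \ref{lemma0} to obtain the closed form \eqref{eq_nabla} for $\nabla V^{(i)}$, then imposing consistency at order $i=k+1$ by computing $\nabla V^{(k+1)}$ both from the closed form (reducing $V^{(k+1)}$ and $V^{(k+2)}$ via the curvature relation) and by applying $\nabla$ directly to $V^{(k+1)}=\sum_{i=0}^{k-1}(-1)^{i+1}K_iV^{(i)}$, which yields exactly the paper's intermediate system of $k+1$ one-form identities. Your coefficient-of-$V^{(k)}$ step reproduces the paper's relation $2\alpha'=-k\beta''$ (so $\alpha^{(r)}=-\tfrac{k}{2}\beta^{(r+1)}$, consistent with the $-\tfrac{k}{2}$ factors in \eqref{system}, and note the lemma's displayed equation $\alpha'+k\beta''=0$ differs from this by a factor of $2$ in the paper itself), and the subsequent elimination of $\alpha$ is precisely the paper's final step.
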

\begin{proof}
A consecutive application of Lemma \ref{lemma0} gives $\mathcal{L}^i_{X_F}\nabla V=\nabla V^{(i)} \mod \XX_F$. This written in terms of $\alpha$ and $\beta$ reads
\begin{equation}\label{eq_nabla}
\nabla V^{(i)}=\sum_{j=0}^{i+1}\left(\binom{i}{j}\alpha^{(i-j)}+ \binom{i}{j-1}\beta^{(i-j+1)}\right)V^{(j)}.
\end{equation}
The formula is valid for all $i$. For $i=1,\ldots,k$ it defines the connection uniquely (note that for $i=k$ the formula involves $K_j$'s via the last term $V^{(k+1)}=\sum_{j=0}^{k-1}(-1)^{j+1}K_jV^{(j)}$) and for $i=k+1$ it gives a set of conditions that should be satisfied by $\alpha$ and $\beta$. The conditions are as follows
\begin{eqnarray*}
&&\binom{k+1}{j}\alpha^{(k-j+1)}+\binom{k+1}{j-1}\beta^{(k-j+2)} +(-1)^{j+1}K_j'\beta+(-1)^{j+1}(k-j+1)K_j\beta' \\ 
&&\qquad=(-1)^{j+1}dK_j +\sum_{l=j+1}^{k-1}(-1)^{l+1} \left(\binom{l}{j}\alpha^{(l-j)}+ \binom{l}{j-1}\beta^{(l-j+1)}\right)K_l
\end{eqnarray*}
for $j=0,\ldots,k$. In particular, for $j=k$ we get
$$
2\alpha'=-k\beta''
$$
and using it we can eliminate derivatives of $\alpha$ from the remaining equations and obtain \eqref{system} as a result.
\end{proof}

We get the following result
\begin{theorem}\label{thm2}
An ODE of order $k+1$ with the vanishing W\"unschmann invariants admits a totally geodesic paraconformal connection if and only if there exists a one-form $\beta$ on $J^k(\R,\R)$ satisfying
\begin{equation}\label{eq_gen}
-\frac{1}{2}\binom{k+2}{3}\beta'''+(-1)^kK_{k-1}'\beta+2(-1)^kK_{k-1}\beta'=(-1)^kdK_{k-1}
\end{equation}
and
\begin{equation}\label{cond_gen}
\beta(V)=\beta(V')=\cdots=\beta(V^{(k-1)})=0.
\end{equation}
\end{theorem}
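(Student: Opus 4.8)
The plan is to translate the existence of a totally geodesic paraconformal connection into the solvability of the system \eqref{system} of Lemma \ref{lemma1} together with the extra requirement that the projected connection $\tilde\nabla$ makes the $\alpha$-submanifolds, i.e.\ the projections of the leaves of $\DD_k$, totally geodesic. By Lemma \ref{lemma0} and the compatibility condition \eqref{cond_nabla}, producing such a connection is the same as finding one-forms $(\alpha,\beta)$ on $J^k(\R,\R)$ solving \eqref{system} and encoding a $\nabla$ whose projection is totally geodesic. I would show that the totally geodesic requirement is exactly \eqref{cond_gen}, and that, once \eqref{cond_gen} is imposed, the whole system \eqref{system} collapses to its top ($j=k-1$) equation, which is \eqref{eq_gen}.

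First I would isolate the totally geodesic condition. Writing $\nabla V^{(b)}$ through \eqref{eq_nabla}, one sees that for $b\le k-2$ only the vectors $V^{(0)},\ldots,V^{(b+1)}$ occur, all of which lie in $\DD_k=\spn\{V,\ldots,V^{(k-1)}\}$ and hence project into $\VV_k$. For $b=k-1$ the sole term leaving $\DD_k$ is the one with $j=k$, whose coefficient is $\binom{k-1}{k-1}\beta=\beta$, so that $\nabla_Y V^{(k-1)}\equiv\beta(Y)\,V^{(k)}$ modulo $\DD_k$ for every vector field $Y$. Projecting to $M_F$ and recalling $\pi_*V^{(k)}\notin\VV_k$, the distribution $\VV_k$ is $\tilde\nabla$-parallel along itself if and only if $\beta(V^{(a)})=0$ for $a=0,\ldots,k-1$, which is precisely \eqref{cond_gen}.

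Next I would compare \eqref{eq_gen} with \eqref{system}. A direct computation of the binomial coefficient gives $\binom{k+1}{k-2}-\frac{k}{2}\binom{k+1}{k-1}=-\frac12\binom{k+2}{3}$, and the remaining terms match, so \eqref{eq_gen} is exactly the $j=k-1$ instance of \eqref{system}; this already settles the ``only if'' direction, since a solution of the full system automatically satisfies its top equation. For the converse I would exploit the elementary identity
\begin{equation*}
\beta^{(m)}(V^{(a)})=0\qquad\text{whenever } a+m\le k-1,
\end{equation*}
proved by induction on $m$ from $\beta^{(m+1)}(V^{(a)})=X_F\big(\beta^{(m)}(V^{(a)})\big)-\beta^{(m)}(V^{(a+1)})$ and the base case \eqref{cond_gen}. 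The highest $\beta$-derivative in the $j$-th equation is $\beta^{(k-j+2)}$, which is exactly what results from applying $\mathcal{L}_{X_F}^{\,k-1-j}$ to the leading term $\beta'''$ of \eqref{eq_gen}. I would therefore show that, modulo \eqref{cond_gen}, the $j$-th equation of \eqref{system} equals a constant multiple of $\mathcal{L}_{X_F}^{\,k-1-j}$ applied to \eqref{eq_gen} plus lower-order terms built from the curvatures $K_l$, and that these correction terms vanish on account of the differential relations among the $K_l$ forced by the vanishing of the W\"unschmann invariants. This reduces the entire system to \eqref{eq_gen}.

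Finally, given a $\beta$ satisfying \eqref{cond_gen} and \eqref{eq_gen}, I would recover $\alpha$ by integrating the relation $2\alpha'=-k\beta''$ (the $j=k$ equation from the proof of Lemma \ref{lemma1}) along the flow of $X_F$; since $\alpha^{(m)}=-\frac{k}{2}\beta^{(m+1)}$ then holds for all $m$, the elimination of $\alpha$ that produced \eqref{system} is legitimate, the connection $\nabla$ is well defined, and by Lemma \ref{lemma0} it descends to the required $\tilde\nabla$. The main obstacle is the reduction in the previous paragraph: verifying that the lower equations are genuine differential consequences of \eqref{eq_gen} demands careful bookkeeping of the binomial coefficients in \eqref{eq_nabla}--\eqref{system} and, crucially, the explicit curvature identities implied by the W\"unschmann condition (for $k=2$, for instance, one needs $K_1'=-2K_0$, which is exactly what makes the single lower equation the $\mathcal{L}_{X_F}$-derivative of \eqref{eq_gen}).
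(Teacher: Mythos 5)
Your ``only if'' direction is correct and coincides with the paper's: the identity $\binom{k+1}{k-2}-\frac{k}{2}\binom{k+1}{k-1}=-\frac{1}{2}\binom{k+2}{3}$ identifies \eqref{eq_gen} with the $j=k-1$ equation of \eqref{system}, and reading off from \eqref{eq_nabla} that $\beta$ is the coefficient of $V^{(k)}$ in $\nabla V^{(k-1)}$ yields \eqref{cond_gen}. The recovery of $\alpha$ from $2\alpha'=-k\beta''$ at the end is also fine.

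The converse, however, is where the substance of the theorem lies, and there your argument has a genuine gap. Everything rests on the assertion that, modulo \eqref{cond_gen}, the $j$-th equation of \eqref{system} equals a constant multiple of $\mathcal{L}_{X_F}^{k-1-j}$ applied to \eqref{eq_gen} plus correction terms which ``vanish on account of the differential relations among the $K_l$ forced by the vanishing of the W\"unschmann invariants.'' You never prove this, and it is not mere bookkeeping: carrying it out requires the explicit form of \emph{all} the W\"unschmann conditions written in terms of the curvatures $K_l$. These are nonlinear and rapidly become complicated (already for $k=3$ one of them is $K_0+\frac{3}{10}K_1'-\frac{9}{100}K_2^2=0$, equation \eqref{wun0_ord4}), and the paper explicitly declines to write them down for general $k$. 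Worse, in the paper these $K_l$-identities are themselves \emph{extracted from} Theorem \ref{thm2} by precisely the elimination you propose (see the remark following the theorem), so invoking them inside your proof would require an independent derivation from the definitions \eqref{eq_a} and \eqref{eq_b}; otherwise the argument is circular. Your check for $k=2$ (using $K_1'=-2K_0$) is correct, but it does not by itself generalize. What you are missing is the paper's one move that renders all of this unnecessary: Lemma \ref{lemma0} permits an arbitrary section of $\XX_F$, and the W\"unschmann condition supplies a function $f$ for which \eqref{eq_wun} holds. Recomputing \eqref{system} with $fX_F$ in place of $X_F$ makes every curvature $K_i$ vanish identically, so the system collapses to $\beta^{(k-j+2)}=0$, $j=0,\ldots,k-1$; each lower equation is then literally an iterated derivative of the top one, and the reduction you attempt by brute force becomes trivial.
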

\begin{proof}
Assume first that an ODE admits a totally geodesic paraconformal connection. Then by Lemma \ref{lemma1} it satisfies System \eqref{system}. In particular, for $j=k-1$ one gets \eqref{eq_gen}. Moreover one should have
$$
\nabla_{V^{(j)}}V^{(k-1)}\in D_k\mod \XX_F
$$
for all $j\leq k-1$. But, it follows from \eqref{eq_nabla} that the coefficient of $\nabla_{V^{(j)}}V^{(k-1)}$ next to $V^{(k)}$ is exactly the one form $\beta$ evaluated on $V^{(j)}$. Therefore $\beta(V)=\beta(V')=\cdots=\beta(V^{(k-1)})=0$.

In order to prove the theorem in the opposite direction it is sufficient to show that if \eqref{eq_gen} has a solution $\beta$ and the W\"unschmann invariants vanish then $\beta$ solves also all other equations from the System \eqref{system}. But in Lemma \ref{lemma0} one can use an arbitrary section of $\XX_F$ instead of $X_F$. It is convenient to make all computations using a multiple of $X_F$ by a function $f$ as in \eqref{eq_wun}. Such a function $f$ exists due to the W\"unschmann condition. If \eqref{eq_wun} is satisfied then all $K_i$ in \eqref{system} are zero and the System \eqref{system} takes the form $\beta^{(k-j+2)}=0$, $j=0,\ldots,k-1$. The system clearly has a solution.
\end{proof}

\begin{remark}
A reasoning similar to the proof of Theorem \ref{thm2} implies that the projections to $M_F$ of the integral manifolds of $\DD_i$ are totally geodesic for a paraconformal connection if $\beta(V)=\beta(V')=\cdots=\beta(V^{(i-1)})=0$. In particular, if projections of the integral manifolds of $\DD_i$ are totally geodesic then also projections of the integral manifolds of $\DD_j$ for $j<i$ are totally geodesic. Let us use \eqref{eq_nabla} again and compute the following torsion coefficient
\begin{eqnarray*}
&&T(\nabla)(V^{(i)},V^{(i+1)})=\nabla_{V^{(i)}}V^{(i+1)}-\nabla_{V^{(i+1)}}V^{(i)}-[V^{(i)},V^{(i+1)}]\\
&&\qquad=\beta(V^{(i)})V^{(i+2)}\mod\DD_{i+2}.
\end{eqnarray*}
The last equality holds because $[V^{(i)},V^{(i+1)}]\in\DD_{i+2}$. The expression has sense for $i=0,\ldots,k-2$ and it follows that the condition $\beta(V)=\beta(V')=\cdots=\beta(V^{(i)})=0$ is expressed in terms of the torsion $T(\nabla)(V^{(i)},V^{(i+1)})$ for $i=0,\ldots,k-2$. However, the condition \eqref{cond_gen} for $i=k-1$ has a different nature.
\end{remark}

\begin{remark}
Instead of using in the proof of Theorem \ref{thm2} the vector field $fX_F$ satisfying \eqref{eq_wun} one can differentiate \eqref{eq_gen} sufficiently many times and subtract it from the remaining equations from \eqref{system} in such a way that the highest derivatives of $\beta$ are eliminated. Then one will recover the W\"unschmann condition as vanishing of coefficients next to the derivatives of $\beta$ of lower order. Conditions are given in terms of $K_i$'s. In particular in the case of an equation of order 3 we get
\begin{equation}\label{wun_ord3}
K_0+\frac{1}{2}K_1'=0
\end{equation}
and it can be checked that $W_0=K_0+\frac{1}{2}K_1'$ is really the W\"unschmann invariant ($K_0$ and $K_1$ are given explicitly below in Appendix \ref{ap_formulae}).

In the case of order 4 we get
\begin{eqnarray}
&&K_0+\frac{3}{10}K_1'-\frac{9}{100}K_2^2=0,\label{wun0_ord4}\\
&&K_1+K_2'=0.\label{wun1_ord4}
\end{eqnarray}
We have computed that the conditions coincide with the conditions in \cite[Theorem 1.3]{DT} and consequently with \cite{B}  (again, $K_0$, $K_1$ and $K_2$ are given explicitly below in Appendix \ref{ap_formulae}). Namely \eqref{wun1_ord4} is exactly the second condition in \cite{DT} and $W_1=K_1+K_2'$ is the W\"unschmann invariant. The first condition in \cite{DT} has the form
$$
K_0+K_1'+\frac{7}{10}K_2''-\frac{9}{100}K_2^2-\frac{1}{4}\partial_3F(K_1+K_2')=0
$$
which is \eqref{wun0_ord4} modulo \eqref{wun1_ord4} and the derivative of \eqref{wun1_ord4}.

In the general case the simplest W\"unschmann condition has the form
\begin{equation}\label{wun_gen}
K_{k-2}+\frac{k-1}{2}K_{k-1}'=0
\end{equation}
The other are more complicated, but we will not need them in the explicit form.
\end{remark}

\section{Twistor correspondence}\label{sec_twistor}
The condition \eqref{cond_gen} means that the one-form $\beta$ is a pullback of a one-form defined on the space $B=J^0(\R,\R)$. One can call $B$ the twistor space. Indeed, due to the double fibration \eqref{diag_fibr} a point in $B$ can be considered as a hypersurface in $M_F$ and a point in $M_F$ is represented by a curve in $B$ which is a solution to $(F)$. There is $(k+1)$-parameter family of such curves corresponding to different points in $M_F$. 

In the complex setting one can repeat the reasoning of \cite[Section 5]{H}. One considers a complex surface $B$ and a curve $\gamma\subset B$ with a normal bundle $N_\gamma\simeq O(k)$. Then $H^0(\gamma,N_\gamma)=\C^{k+1}$ and $H^1(\gamma,N_\gamma)=0$. Therefore by the Kodaira theorem one gets a $(k+1)$-dimensional complex manifold $M$ parametrising a family of curves in $B$ with self intersection number $k$. One can see a paraconformal structure in this picture, geodesics of an adapted connection and a set of totally geodesic surfaces. Indeed, if $\gamma$ is a curve in $B$  then due to $N_\gamma\simeq O(k)$ we get that for any collection of points $\{y_1,\ldots,y_k\}$, $y_i\in \gamma$, possibly with multiplicities, there is a one-parameter family of curves in $B$ which intersect $\gamma$ exactly at these points. The family of curves defines a geodesic in $M$ (the fact that such a definition gives geodesics of a connection can be proved exactly as in \cite{B} and follows from the fact $H^1(\gamma,N_\gamma)=0$). The null geodesics are defined by $y_i$'s such that $y_1=y_2=\cdots=y_k$. A totally geodesic hypersurface in $M$ corresponding to a point $y\in B$ is defined by all curves which pass through $y$. It follows automatically from the definition of the geodesics that such hypersurfaces are totally geodesic indeed.

One gets from the twistor construction not a unique connection but rather a set of unparameterised geodesics i.e.~a class of connections sharing the geodesics. The connections have in general non-vanishing torsion if we impose that they are adapted to the paraconformal structure (see \cite{MP} for a twistor construction leading to torsion-free structures). We shall call the structure a \emph{projective structure with a torsion}. Clearly, one can consider torsion-free connections shearing the geodesics, but then the compatibility with the paraconformal structure is lost. Anyway, we shall prove later that starting from dimension 4 (the classical dimensions 2 and 3 considered in \cite{H} are slightly different) the one form $\beta$ is unique and the set of projectively equivalent paraconformal connections depends on an arbitrary one-form $\alpha$ on $M$ as in the case of the torsion-free connections (Corollaries \ref{cor_ord4} and \ref{cor_ordgen}).

\section{Second order}\label{sec_ord2}
Let
$$
x''=F(t,x,x'')
$$
be a second order ODE. The tangent bundle to a two-dimensional manifold has a natural $GL(2,\R)$-structure. Hence, any second order equation defines a paraconformal structure on its solution space. However, the existence of a totally geodesic paraconformal connection is a more restrictive condition which is equivalent to the existence of a projective structure. A result due to Cartan \cite{C1} says that a class of point equivalent ODEs defines a projective structure on the solution space if and only if the Cartan invariant $C$ vanishes. In coordinates (see \cite{CS})
\begin{eqnarray*}
C&=&\partial_0^2F-\frac{1}{2}F\partial_0\partial_1^2F-\frac{1}{2}\partial_0F\partial_1^2F -\frac{2}{3}\partial_t\partial_0\partial_1F +\frac{1}{6}\partial_t^2\partial_1^2F+\\ &&\frac{1}{3}x_1\partial_t\partial_0\partial_1^2F+ \frac{1}{6}\partial_tF\partial_1^3F+ \frac{1}{3}F\partial_t\partial_1^3F- \frac{2}{3}x_1\partial_0^2\partial_1F +\frac{1}{6}x_1^2\partial_0^2\partial_1^2F+\\
&&\frac{1}{6}x_1\partial_0F\partial_1^3F+ \frac{1}{3}x_1F\partial_0\partial_1^3F+ \frac{2}{3}\partial_1F\partial_0\partial_1F- \frac{1}{6}\partial_1F\partial_t\partial_1^2F-\\
&&\frac{1}{6}x_1\partial_1F\partial_0\partial_1^2F+ \frac{1}{6}F^2\partial_1^4F.
\end{eqnarray*}
On the other hand Theorem \ref{thm2} specified to $k=1$ implies that the existence of a totally geodesic paraconformal connection is equivalent to the existence of a solution to
\begin{equation}\label{eq_ord2}
-\frac{1}{2}\beta'''-K_0'\beta-2K_0\beta'=-dK_0
\end{equation}
satisfying
\begin{equation}\label{cond_ord2}
\beta(V)=0.
\end{equation}
Thus, we reproduce Cartan's result in the following form 

\begin{theorem}\label{thm_ord2}
A class of point equivalent second order ODEs defines a projective structure on its solution space if and only if
\begin{equation}\label{cartan_ord2}
4V'(K_0)-V(K_0')=0.
\end{equation}
Additionally $C=4V'(K_0)-V(K_0')$.
\end{theorem}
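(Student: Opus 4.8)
The plan is to specialise the already-proved Theorem \ref{thm2} to the case $k=1$ and then to collapse the resulting one-form equation into a single scalar obstruction. Since equations of order two carry no W\"unschmann invariants, Theorem \ref{thm2} says that the class admits a totally geodesic paraconformal connection, equivalently a projective structure on $M_F$, precisely when the one-form equation \eqref{eq_ord2} has a solution $\beta$ on $J^1(\R,\R)$ obeying the constraint \eqref{cond_ord2}. I would work throughout in the frame $\{V,V',X_F\}$ of $TJ^1(\R,\R)$, where $V=g\partial_1$, $V'=\ad_{X_F}V=-g\partial_0-\tfrac{g}{2}\partial_1F\,\partial_1$, and the single curvature is fixed by $V''=-K_0V$; the two brackets then close as $[X_F,V]=V'$ and $[X_F,V']=-K_0V$. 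In this frame the constraint \eqref{cond_ord2} is $\beta(V)=0$, and the two remaining frame components are the functions $p:=\beta(V')$ and $q:=\beta(X_F)$.

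The computational engine is the Leibniz rule $(\mathcal{L}_{X_F}\beta)(Y)=X_F(\beta(Y))-\beta([X_F,Y])$. Fed the two closed brackets above and applied iteratively, it expresses every pairing $\beta^{(i)}(V)$, $\beta^{(i)}(V')$, $\beta^{(i)}(X_F)$ for $i\le 3$ in terms of $p$, $q$, $K_0$ and their $X_F$-derivatives. Evaluating the vector equation \eqref{eq_ord2} against the three frame vectors then splits it into three scalar equations, and the decisive structural point is that the system decouples: the pairing with $X_F$ is a single third-order linear ODE for $q$ alone, which is locally solvable and hence imposes no constraint, while the pairings with $V$ and with $V'$ involve $p$ alone and are of orders two and three respectively.

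The core of the argument is the compatibility of this overdetermined pair in $p$. I would solve the order-two equation (the pairing with $V$) for $p''$, differentiate it once along $X_F$ to produce $p'''$, and substitute both into the order-three equation (the pairing with $V'$). Invoking the commutator identity $X_F(V(K_0))=V'(K_0)+V(K_0')$, all terms in $p$ and its derivatives cancel, leaving a single scalar relation proportional to $4V'(K_0)-V(K_0')$; its vanishing is exactly \eqref{cartan_ord2}. Conversely, when \eqref{cartan_ord2} holds the same substitution shows that any solution of the surviving order-two ODE for $p$ automatically solves the order-three one, and such $p$, together with a solution $q$ of the decoupled equation, exists locally. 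This yields the claimed equivalence.

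Finally, to establish $C=4V'(K_0)-V(K_0')$ I would fix $g$ by \eqref{eq_g}, substitute the explicit coordinate form of $K_0$ recorded in Appendix \ref{ap_formulae} into $4V'(K_0)-V(K_0')$, expand using $V=g\partial_1$ and $V'=-g\partial_0-\tfrac{g}{2}\partial_1F\,\partial_1$, and match the outcome term-by-term against the displayed expression for $C$ (the two being relative invariants of the same weight once the weight carried by $g$ is taken into account). I expect the main obstacle to lie exactly here, in two related pieces of bookkeeping: tracking the rational coefficients and signs through the iterated Lie derivatives so that the elimination lands on precisely $4V'(K_0)-V(K_0')$, and then carrying out the long but mechanical coordinate check that this equals Cartan's $C$.
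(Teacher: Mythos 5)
Your proposal follows essentially the same route as the paper's proof: specialise Theorem \ref{thm2} to $k=1$, set $b=\beta(V')$ (your $p$), evaluate \eqref{eq_ord2} on $V$ and $V'$ using the Leibniz rule together with $V''=-K_0V$, then eliminate $b$ by differentiating the second-order relation and substituting into the third-order one via the commutator identity $X_F(V(K_0))=V'(K_0)+V(K_0')$, with the identification $C=4V'(K_0)-V(K_0')$ verified by direct coordinate computation from Appendix \ref{ap_formulae}. Your explicit treatment of the decoupled $X_F$-component (the equation for $q=\beta(X_F)$), needed for the converse direction, is a point the paper leaves implicit, but it does not alter the argument.
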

\begin{proof}
We are looking for a common solution to \eqref{eq_ord2} and \eqref{cond_ord2}.
Let us denote
$$
\beta(V')=b.
$$
Taking into account that $V''=-K_0V$ and differentiating \eqref{cond_ord2} one finds
$$
\beta'(V)=-b,\quad \beta''(V)=-2b',\quad \beta'''(V)=-3b''+K_0b
$$
and
$$
\beta'(V')=b',\quad \beta''(V')=b''-K_0b,\quad \beta'''(V')=b'''-K_0'b-3K_0b'.
$$
Thus, evaluating \eqref{eq_ord2} on $V$ and $V'$ one gets
\begin{eqnarray*}
&&\frac{3}{2}b''+\frac{3}{2}K_0b=-V(K_0),\\
&&\frac{1}{2}b'''+\frac{1}{2}K_0'b+\frac{1}{2}K_0b'=V'(K_0).
\end{eqnarray*}
Differentiating the first equation and substituting to the second one one gets \eqref{cartan_ord2}. Besides one can check by direct computations using formulae in Appendix \ref{ap_formulae} that \eqref{cartan_ord2} coincides with $C$. 
\end{proof}

\section{Third order}\label{sec_ord3}
Let
$$
x'''=F(t,x,x',x'')
$$
be a third order ODE. Its solution space is a three dimensional manifold. A paraconformal structure on a three dimensional manifold is a conformal metric $[\mathbf{g}]$ of Lorentzian signature. Moreover, a torsion-free connection adapted to a paraconformal structure is a Weyl connection $\nabla$ for $[\mathbf{g}]$. We recall that if a representative $\mathbf{g}\in[\mathbf{g}]$ is chosen then a Weyl connection $\nabla$ is uniquely defined by a one-form $\varphi$ such that
$$
\nabla\mathbf{g}=\varphi\mathbf{g}.
$$
According to Cartan \cite{C2}, a Weyl connection $\nabla$ is totally geodesic in our sense if and only if the Einstein equation is satisfied
$$
Ric(\nabla)_{sym}=\frac{1}{3}R_\mathbf{g}(\nabla)\mathbf{g}
$$
where $Ric(\nabla)_{sym}$ is the symmetric part of the Ricci curvature of $\nabla$ and $R_\mathbf{g}(\nabla)$ is the scalar curvature with respect to $\mathbf{g}$. The pair $([\mathbf{g}],\nabla)$ is called an Einstein-Weyl structure in this case. Cartan also proved that there is a one to one correspondence between Einstein-Weyl structures and third order ODEs for which the W\"unschmann $W_0$ and Cartan $C$ invariants vanish (see \cite{C3,T}). In coordinates
\begin{eqnarray*}
W_0&=&\partial_0F -\frac{1}{2}X_F(\partial_1F) +\frac{1}{3}\partial_1F\partial_2F+\frac{1}{6}X_F^2(\partial_2F)
 -\frac{1}{3}X_F(\partial_2F)\partial_2F+\frac{2}{27}(\partial_2F)^3,\\
C&=&X_F^2(\partial_2^2F)-X_F(\partial_1\partial_2F)+\partial_0\partial_2F.
\end{eqnarray*}

On the other hand Theorem \ref{thm2} implies that if $W_0=0$ then the existence of a totally geodesic paraconformal connection is equivalent to the existence of a solution to
\begin{equation}\label{eq_ord3}
-2\beta'''+K_1'\beta+2K_1\beta'=dK_1
\end{equation}
satisfying
\begin{equation}\label{cond_ord3}
\beta(V)=\beta(V')=0.
\end{equation}
We reproduce Cartan's result in the following way

\begin{theorem}\label{thm_ord3}
A class of point equivalent third order ODEs defines an Einstein-Weyl structure on its solution space if and only if $W_0=0$ and
\begin{equation}\label{cartan_ord3}
2V'(K_1)+V(K_1')=0
\end{equation}
Additionally $C=-\frac{3}{2}(V'(K_1)-V(K_0))$ and under the W\"unschmann condition $2V'(K_1)+V(K_1')=2(V'(K_1)-V(K_0))=-\frac{3}{4}C$.
\end{theorem}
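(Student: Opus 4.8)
The plan is to specialise Theorem~\ref{thm2} to $k=2$, which reduces the existence of a totally geodesic paraconformal connection (equivalently, under $W_0=0$ and the identification made in Section~\ref{sec_ord3}, an Einstein--Weyl structure) to the existence of a one-form $\beta$ on $J^2(\R,\R)$ solving \eqref{eq_ord3} subject to \eqref{cond_ord3}. Since \eqref{cond_ord3} fixes $\beta$ on $\DD_2=\spn\{V,V'\}$, the only undetermined data are the functions $b=\beta(V'')$ and $c=\beta(X_F)$. First I would record that evaluation on $X_F$ commutes with $\mathcal{L}_{X_F}$, so the $X_F$-component of \eqref{eq_ord3} is a linear third-order ODE for $c$ alone and is always solvable; it therefore plays no role and can be set aside. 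The substantive content comes from pairing the one-form equation \eqref{eq_ord3} with the frame $V,V',V''$, and the first task is to compute the iterated Lie derivatives $\beta',\beta'',\beta'''$ on these vectors using only $\beta(V)=\beta(V')=0$ and the curvature relation $V'''=-K_0V+K_1V'$.

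Carrying out those evaluations, the $V$- and $V'$-components give the two scalar equations $-6b'=V(K_1)$ and $6b''=V'(K_1)$. Differentiating the first along $X_F$ and using the commutator identity $(V(K_1))'=V'(K_1)+V(K_1')$, then comparing with the second, eliminates $b$ entirely and leaves precisely $2V'(K_1)+V(K_1')=0$. This establishes necessity of \eqref{cartan_ord3}, mirroring the order-two computation in Theorem~\ref{thm_ord2}.

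For sufficiency I would, given $W_0=0$ and \eqref{cartan_ord3}, integrate $-6b'=V(K_1)$ to produce $b$ (the constant of integration being a free function on $M_F$); the Cartan condition then guarantees that the $V'$-equation is automatically satisfied. The remaining, and genuinely delicate, point is the $V''$-component of \eqref{eq_ord3}, which reads $-2b'''-(K_1'+2K_0)b-4K_1b'=V''(K_1)$. Here the W\"unschmann relation $K_0=-\tfrac12K_1'$ from \eqref{wun_ord3} is essential: it makes the coefficient $K_1'+2K_0$ of $b$ vanish, so that after substituting the already-determined $b'$ and $b'''$ this equation becomes an identity in $K_0,K_1$ and their $V$- and $X_F$-derivatives, independent of the integration constant. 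I expect this consistency check to be the main obstacle, since a priori the three scalar equations over-determine the single function $b$; verifying that the $V''$-equation is a differential consequence of \eqref{cartan_ord3} together with $W_0=0$ and the structure identities forced by $V'''=-K_0V+K_1V'$ is where the real work lies. Finally, to obtain the last assertions I would substitute the explicit expressions for $K_0,K_1$ from Appendix~\ref{ap_formulae} to check that $C$ equals a fixed multiple of $V'(K_1)-V(K_0)$, and use $W_0=0$ in the form $V(K_1')=-2V(K_0)$ to rewrite the left-hand side of \eqref{cartan_ord3} as $2\big(V'(K_1)-V(K_0)\big)$, exhibiting it as the same multiple of $C$ and so showing that the vanishing of \eqref{cartan_ord3} is equivalent to $C=0$.
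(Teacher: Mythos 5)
Your reduction coincides with the paper's own proof: specialise Theorem \ref{thm2} to $k=2$, put $b=\beta(V'')$, and evaluate \eqref{eq_ord3} on $V,V',V''$. Your three scalar equations ($-6b'=V(K_1)$, $6b''=V'(K_1)$, and the $V''$-component $-2b'''-(K_1'+2K_0)b-4K_1b'=V''(K_1)$) agree with the paper's, the elimination of $b$ from the first two via $(V(K_1))'=V'(K_1)+V(K_1')$ is exactly the paper's derivation of \eqref{cartan_ord3}, and your observations that the $X_F$-component is unobstructed and that $W_0=0$ (i.e.\ $K_1'=-2K_0$) annihilates the coefficient of $b$ in the third equation are both correct. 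Up to this point the necessity half of the theorem is proved.

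The sufficiency half, however, has a genuine gap which you flag but do not close. Once $b$ is fixed by $b'=-\tfrac16V(K_1)$, the $V''$-component does not become an identity by mere substitution: it becomes the nontrivial relation $K_1V(K_1)=2V''(K_1)-V'(K_0)$ (the paper's \eqref{cartan_ord3'}, after also using the $X_F$-derivative of \eqref{cartan_ord3}), and the entire content of the theorem is that this relation is a \emph{consequence} of \eqref{cartan_ord3} and the W\"unschmann condition, so that no further condition on $(F)$ is needed. That implication is not formal and does not follow from any manipulation you have performed; it is precisely what the paper proves in the lemma embedded in its proof: one writes $[V,V']=AV+BV'$ (legitimate because $\spn\{V,V'\}=\spn\{\partial_1,\partial_2\}$ is integrable), brackets repeatedly with $X_F$ using $V'''=-K_0V+K_1V'$ to express $[V,V'']$ and $[V',V'']$ in terms of $A$, $B$ and their derivatives, invokes the Jacobi identity to obtain three scalar identities, and then eliminates $A$ and $B$ (using $W_0=0$) to land, after adding the differentiated form of \eqref{cartan_ord3}, exactly on the needed relation. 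Without this argument, or a coordinate verification replacing it, your proposal establishes only the ``only if'' direction of the theorem; the sentence ``this is where the real work lies'' concedes the missing step rather than supplying it.
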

\begin{proof}
The formula for $C$ in terms of $K_0$ and $K_1$ can be verified by computations using the appropriate formulae given in Appendix \ref{ap_formulae}. The invariant meaning of this expression follows from our proof. We are looking for a common solution to \eqref{eq_ord3} and \eqref{cond_ord3}. Let us denote
$$
\beta(V'')=b.
$$
Taking into account that $V'''=-K_0V+K_1V'$ and differentiating \eqref{cond_ord3} one finds
$$
\beta'(V)=0,\quad \beta''(V)=b,\quad \beta'''(V)=3b',
$$
$$
\beta'(V')=-b,\quad \beta''(V')=-2b',\quad \beta'''(V')=-3b''-K_1b,
$$
and
$$
\beta'(V'')=b',\quad \beta''(V'')=b''+K_1b,\quad \beta'''(V'')=b'''+K_1'b+3K_1b'+K_0b.
$$
Thus, evaluating \eqref{eq_ord3} on $V$, $V'$ and $V''$ one gets
\begin{eqnarray*}
&&6b'=-V(K_1),\\
&&6b''=V'(K_1),\\
&&2b'''+K_1'b+4K_1b'+2K_0b=-V''(K_1).
\end{eqnarray*}
The last equation reduces to $-2b'''-4K_1b'=V''(K_1)$ due to the W\"unschmann condition $W_0=0$ which is equivalent to $K_1'=-2K_0$. Then, differentiating the first equation one gets that a common solution $b$ exists if and only if
$V'(K_1)+\frac{1}{2}V(K_1')=0$ and $K_1V(K_1)=2V''(K_1)+\frac{1}{2}V_1(K_1')$. We get \eqref{cartan_ord3} and using the W\"unschmann condition again
\begin{equation}\label{cartan_ord3'}
K_1V(K_1)=2V''(K_1)-V_1(K_0).
\end{equation}
Now, the theorem follows from the following
\begin{lemma}
If \eqref{cartan_ord3} holds and the W\"unschmann invariant vanishes for a third order ODE then also \eqref{cartan_ord3'} holds.
\end{lemma}
\begin{proof}
We can write $[V,V']=AV+BV'$ for some functions $A$ and $B$. Taking the Lie brackets with $X_F$ and using the fact that $V'''=-K_0V+K_1V'$ we get formulae for $[V,V'']$, $[V',V'']$ in terms of $A$, $B$ and their derivatives. Namely
$[V,V'']=A'V+(A+B')V'+BV''$ and $[V',V'']=(A''+V(K_0)-BK_0-AK_1)V+(2A'+B''-V(K_1))V'+(A+2B')V''$. One more Lie bracket and the Jacobi identity gives the following three equations
\begin{eqnarray*}
&&3A'+3B''-V(K_1)=0,\\
&&3A''+B'''-X_FV(K_1)+V(K_0)-2BK_0+2B'K_1-V'(K_1)=0,\\
&&A'''-3B'K_0-BK_0'-A'K_1+XV(K_0)+V'(K_0)=0.
\end{eqnarray*}
Differentiating the first equation and substituting to the second and third one we can eliminate $A$ and its derivatives. But due to the W\"unschmann condition we can also eliminate $B$ and its derivatives and get one relation
$$
2V'(K_0)+\frac{2}{3}X_F^2V(K_1)+2X_FV(K_0)-\frac{2}{3}K_1V(K_1).
$$
The subsequent use of the W\"unschmann condition and the relation $\ad_{X_F}^iV=V^{(i)}$ reads
$$
K_1V(K_1)=2V'(K_0)+V''(K_1)+V(K_0').
$$
On the other hand, differentiating \eqref{cartan_ord3} we get
$$
0=V''(K_1)-3V'(K_0)-V(K_0')
$$
and adding the last two equations we finally get \eqref{cartan_ord3'}.
\end{proof}
\end{proof}

\section{Fourth order}\label{sec_ord4}
Let
$$
x^{(4)}=F(t,x,x',x'',x''')
$$
be a fourth order ODE. This is the case considered by Bryant \cite{B}. However, the torsion free connections of \cite{B} are not, in general, totally geodesic in the sense of the present paper. Bryant proved in \cite[Theorem 4.1]{B} that a paraconformal structure possesses a torsion-free connection (unique) if and only if every null plane, i.e.\ every subspace $\VV_2(s:t)(x)\subset T_xM$ for $x\in M$ and $(s:t)\in \R P^1$ in the notation of Section \ref{sec_paraconf}, is tangent to a totally-geodesic surface in $M$. We argued in Section \ref{sec_ODEcon} (the first remark following Theorem \ref{thm2}) that this condition is really expressed in terms of the torsion. Actually, according to Bryant, this condition is also equivalent to the fact that a paraconformal structure is defined by an equation and it can be expressed as vanishing of a polynomial of degree 7 in $(s:t)$ (compare \cite{K1}). The corresponding ODE satisfies the Bryant-W\"unschmann condition and the geometry is related to the contact geometry of ODEs.

On contrary, Theorem \ref{thm2} describes paraconformal structures satisfying more restrictive conditions related to point geometry of ODEs. Namely, we assume that any subspace $\VV_3(s:t)(x)\subset T_xM$ for $x\in M$ and $(s:t)\in \R P^1$ is tangent to a totally-geodesic submanifold of $M$. We get that it happens if and only if the Bryant-W\"unschmann condition holds and additionally there is a solution to
\begin{equation}\label{eq_ord4}
5\beta'''+K_2'\beta+2K_2\beta'=dK_2
\end{equation}
satisfying
\begin{equation}\label{cond_ord4}
\beta(V)=\beta(V')=\beta(V'')=0.
\end{equation}
Of course, a torsion-free connection also exists in this case, but this connection is not necessarily totally geodesic in our sense as we will see in Section \ref{sec_Veronese}. We get the following new result

\begin{theorem}\label{thm_ord4}
A class of point equivalent fourth order ODEs admits a totally geodesic paraconformal connection if and only if the Bryant-W\"unschmann condition holds and
\begin{equation}\label{cartan_ord4}
4V'(K_2)+3V(K_2')=0.
\end{equation}
Additionally under the Bryant-W\"unschmann condition $4V'(K_2)+3V(K_2')=4V'(K_2)-3V(K_1)$.
\end{theorem}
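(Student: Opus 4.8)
The plan is to follow the template established in the proofs of Theorems \ref{thm_ord2} and \ref{thm_ord3}, namely to extract an algebraic compatibility condition from the overdetermined system consisting of the differential equation \eqref{eq_ord4} together with the three constraints \eqref{cond_ord4}. I would set $b=\beta(V''')$ as the single scalar unknown (the analogue of $b=\beta(V'')$ in the order-three case) and then repeatedly differentiate the constraints $\beta(V^{(i)})=0$ for $i=0,1,2$ along $X_F$, using $V^{(4)}=-K_0V+K_1V'-K_2V''$ (the $k=3$ case of the structure equation, reading off the signs from $V^{(k+1)}=\sum_{j=0}^{k-1}(-1)^{j+1}K_jV^{(j)}$), to express $\beta'(V^{(i)})$, $\beta''(V^{(i)})$, $\beta'''(V^{(i)})$ in terms of $b$ and its $X_F$-derivatives $b',b'',b'''$ and the curvatures $K_0,K_1,K_2$.

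First I would evaluate \eqref{eq_ord4} on each of $V,V',V''$. By analogy with the order-three computation (where evaluating on $V,V'$ gave $6b'=-V(K_1)$, $6b''=V'(K_1)$ before the third evaluation produced the genuine constraint), I expect the first two evaluations to yield relations of the form $b'=\const\cdot V(K_2)$ and $b''=\const\cdot V'(K_2)$ with explicit rational constants determined by the coefficient $-\tfrac12\binom{k+2}{3}=-10$ appearing in \eqref{eq_gen} for $k=3$ and the binomial bookkeeping of \eqref{eq_nabla}. The third evaluation, on $V''$, will produce an equation for $b'''$ that additionally involves $K_0,K_1,K_2$ linearly in $b$; at this point I would invoke the Bryant-W\"unschmann conditions \eqref{wun0_ord4} and \eqref{wun1_ord4}, in particular $K_1=-K_2'$, to eliminate the lower curvatures. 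Differentiating the first relation ($b'\propto V(K_2)$) twice and matching against the third equation should then collapse the system to the single scalar relation \eqref{cartan_ord4}, i.e. $4V'(K_2)+3V(K_2')=0$, and the stated reformulation $4V'(K_2)-3V(K_1)$ follows immediately from $K_1=-K_2'$ together with the commutation $V'(K_2')=(V(K_2))'$ modulo the W\"unschmann relations.

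The main obstacle I anticipate is precisely the analogue of the Lemma embedded in the proof of Theorem \ref{thm_ord3}: showing that \eqref{cartan_ord4} is not merely \emph{necessary} but also \emph{sufficient}, i.e. that when \eqref{cartan_ord4} holds the scalar $b$ determined by the first two evaluations is genuinely consistent with the third. In the order-three case this required a separate argument using the structure functions $A,B$ defined by $[V,V']=AV+BV'$, the Jacobi identity, and a nontrivial cancellation that used the W\"unschmann condition twice. For order four the Frobenius-type integrability will involve more structure functions (the brackets $[V^{(i)},V^{(j)}]$ now span a larger space), so this consistency check is where the real work lies. Rather than reprove integrability by hand, I would prefer to appeal to the sufficiency half of Theorem \ref{thm2}: since the Bryant-W\"unschmann condition holds, there exists a function $f$ with $fX_F$ satisfying \eqref{eq_wun}, and then \emph{all} the curvatures in System \eqref{system} vanish, so the full system reduces to $\beta^{(k-j+2)}=0$; hence a solution of \eqref{eq_ord4}-\eqref{cond_ord4} automatically solves the entire system. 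This reduces the theorem to verifying that \eqref{cartan_ord4} is exactly the solvability condition for the pair \eqref{eq_ord4}-\eqref{cond_ord4}, which is the scalar computation described above.

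Finally, to confirm the identification $4V'(K_2)+3V(K_2')=4V'(K_2)-3V(K_1)$ under the Bryant-W\"unschmann condition, I would substitute $K_1=-K_2'$ from \eqref{wun1_ord4} into the right-hand side and check that $V(K_1)=-V(K_2')$, which is immediate since $V$ acts as a derivation and $K_2'=X_F(K_2)$; thus $-3V(K_1)=3V(K_2')$, giving the claimed equality. As a consistency check on the whole scheme, I would verify that specialising $k=3$ in \eqref{eq_gen} reproduces \eqref{eq_ord4} with the correct sign $-\tfrac12\binom{5}{3}=-10$ matched against the $+5$ written in \eqref{eq_ord4} (noting the factor $(-1)^k=-1$ and an overall normalisation of $\beta$), so that the constants propagated through the three evaluations land on the stated coefficients $4$ and $3$ in \eqref{cartan_ord4} rather than some other ratio.
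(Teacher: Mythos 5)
Your necessity argument follows the paper's template, but your predicted intermediate equations are shifted by one derivative order, and this matters. With the three constraints $\beta(V)=\beta(V')=\beta(V'')=0$ one finds $\beta'''(V)=-b$, $\beta'''(V')=3b'$, $\beta'''(V'')=-3b''+K_2b$ and $\beta'''(V''')=b'''-K_2'b-3K_2b'-K_1b$, so evaluating \eqref{eq_ord4} on the frame produces \emph{four} equations,
$$
5b=-V(K_2),\qquad 15b'=V'(K_2),\qquad 15b''-3K_2b=-V''(K_2),\qquad 5b'''-4K_2'b-13K_2b'-5K_1b=V'''(K_2),
$$
the first of which determines $b$ \emph{algebraically} (this is exactly what makes the uniqueness statement of Corollary \ref{cor_ord4} immediate). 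The invariant \eqref{cartan_ord4} comes from differentiating $5b=-V(K_2)$ once and comparing with $15b'=V'(K_2)$ --- not, as you expect by analogy with order three, from relations $b'\propto V(K_2)$, $b''\propto V'(K_2)$ and a twice-differentiated match against a $b'''$-equation; the order-three pattern does not persist because the extra constraint $\beta(V'')=0$ pushes $b$ itself into $\beta'''(V)$. Carrying out the computation would self-correct this, but as written your plan mis-locates the source of \eqref{cartan_ord4} and, more importantly, fails to register that there are \emph{two} further equations (on $V''$ and $V'''$) beyond the one that yields the invariant.

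This is where the genuine gap lies: sufficiency. You correctly flag it as ``where the real work lies'', but your proposed shortcut via Theorem \ref{thm2} is circular. Theorem \ref{thm2} is precisely what reduces the whole problem to the solvability of the pair \eqref{eq_ord4}--\eqref{cond_ord4}; re-invoking its sufficiency half only tells you that a solution of this pair solves all of System \eqref{system}, which was never in question. What actually has to be proved is that \eqref{cartan_ord4} together with the Bryant--W\"unschmann condition forces the overdetermined scalar system displayed above to be consistent, i.e.\ that after substituting $b=-\tfrac{1}{5}V(K_2)$ the third and fourth equations hold identically. Nothing in Theorem \ref{thm2} addresses this, and your ``scalar computation described above'' only establishes necessity. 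The paper disposes of this step by direct computation in coordinates (and in order three the analogous step required the separate bracket/Jacobi-identity lemma you mention, which has no order-four counterpart in your proposal). So as it stands your argument proves only the necessity of \eqref{cartan_ord4}; the sufficiency half still needs either the coordinate verification or an invariant analogue of the order-three lemma.
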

\begin{proof}
The proof is similar to the proofs of Theorems \ref{thm_ord2} and \ref{thm_ord3}. Let us denote
$$
\beta(V''')=b.
$$
Taking into account that $V^{(4)}=-K_0V+K_1V'-K_2V''$ and differentiating \eqref{cond_ord4} one finds
$$
\beta'(V)=0,\quad \beta''(V)=0,\quad \beta'''(V)=-b,
$$
$$
\beta'(V')=0,\quad \beta''(V')=b,\quad \beta'''(V')=3b',
$$
$$
\beta'(V'')=-b,\quad \beta''(V'')=-2b',\quad \beta'''(V'')=-3b''+K_2b.
$$
and
$$
\beta'(V''')=b',\quad \beta''(V''')=b''-K_2b,\quad \beta'''(V''')=b'''-K_2'b-3K_2b'-K_1b.
$$
Thus, evaluating \eqref{eq_ord4} on $V$, $V'$, $V''$ and $V'''$ one gets
\begin{eqnarray*}
&&5b=-V(K_2),\\
&&15b'=V'(K_2),\\
&&15b''-3K_2b=-V''(K_2),\\
&&5b'''-4K_2'b-13K_2b'-5K_1b=V'''(K_2).
\end{eqnarray*}
If we differentiate the first equation and substitute it to the second one we get the condition \eqref{cartan_ord4}. From the last two equations we get two additional conditions of higher order. However, as in the case of order three, they are consequences of \eqref{cartan_ord4} and the W\"unschmann condition and do not give new conditions on the equation $(F)$ (we have checked it by direct computations in coordinates). The identity $4V'(K_2)+3V(K_2')=4V'(K_2)-3V(K_1)$ follows from \eqref{wun1_ord4}.
\end{proof}

\begin{corollary}\label{cor_ord4}
If a fourth order ODE admits a totally geodesic paraconformal connection then a solution $\beta$ to \eqref{eq_ord4} and \eqref{cond_ord4} is unique.
\end{corollary}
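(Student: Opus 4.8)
The plan is to show that every solution $\beta$ of \eqref{eq_ord4}--\eqref{cond_ord4} is completely pinned down on the frame $\{V,V',V'',V''',X_F\}$ of $J^3(\R,\R)$, so that uniqueness is immediate. Three of the five components, $\beta(V)=\beta(V')=\beta(V'')=0$, are the defining condition \eqref{cond_ord4}. The fourth, $\beta(V''')$, is already extracted in the proof of Theorem \ref{thm_ord4}: differentiating \eqref{cond_ord4} along $X_F$ gives $\beta'''(V)=-\beta(V''')$, so evaluating \eqref{eq_ord4} on $V$ yields the purely algebraic relation $5\beta(V''')=-V(K_2)$. I would emphasise that this is precisely where the order being at least four is essential. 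The three conditions in \eqref{cond_ord4} are exactly what is needed to reduce $\beta'''(V)$ to a multiple of $\beta(V''')$ with no surviving $X_F$-derivatives; in orders two and three the analogous evaluation on $V$ produces instead a genuine differential equation for $\beta(V')$, respectively $\beta(V'')$, leaving a one- or two-parameter family of solutions, which is why $\beta$ fails to be unique there.

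The remaining component $\beta(X_F)$ is the delicate one and is the main obstacle. Equation \eqref{eq_ord4} on its own does \emph{not} fix it: evaluating \eqref{eq_ord4} on $X_F$ and using $[X_F,X_F]=0$ gives only the third-order ordinary differential equation $5X_F^3(\beta(X_F))+2K_2X_F(\beta(X_F))+K_2'\beta(X_F)=K_2'$ along the integral curves of $X_F$, whose homogeneous part has a three-dimensional solution space. Indeed, for the trivial equation $x^{(4)}=0$ one has $K_2=0$ and the homogeneous version of \eqref{eq_ord4}--\eqref{cond_ord4} is solved by $P(t)\,dt$ for every quadratic polynomial $P$, so \eqref{eq_ord4} alone underdetermines $\beta$ by exactly this one function. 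The normalisation must therefore come from the compatibility conditions of Lemma \ref{lemma0}, which are part of the definition of the connection and are not captured by \eqref{eq_ord4}. Concretely, condition (2) of Lemma \ref{lemma0}, applied with $X=X_F$ and $Y=V$, reads $\nabla_{X_F}V=[X_F,V]=V'\bmod\XX_F$, while \eqref{cond_nabla} gives $\nabla_{X_F}V=\alpha(X_F)V+\beta(X_F)V'$. Comparing coefficients in the frame forces $\beta(X_F)=1$ (and $\alpha(X_F)=0$).

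With all five frame components of $\beta$ so determined, uniqueness follows at once: if $\beta_1,\beta_2$ are two solutions then $\gamma=\beta_1-\beta_2$ annihilates $V,V',V'',V'''$ and $X_F$, hence vanishes identically, so $\beta_1=\beta_2$. I expect the only routine work to be the bookkeeping that yields $\beta'''(V)=-\beta(V''')$ by repeated differentiation of \eqref{cond_ord4}, but this is already tabulated in the proof of Theorem \ref{thm_ord4} and can simply be quoted. The conceptual heart of the argument, and the point I would flag as the genuine obstacle, is recognising that \eqref{eq_ord4} leaves the single degree of freedom $\beta(X_F)$ free, and that this last freedom is removed not by the differential equation but by the descent conditions of Lemma \ref{lemma0} that make $\nabla$ a well-defined connection on $M_F$.
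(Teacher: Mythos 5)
Your proposal is correct, and its algebraic core is exactly the paper's proof: the paper's entire argument is the observation that, given \eqref{cond_ord4}, evaluating \eqref{eq_ord4} on $V$ produces the purely algebraic relation $5b=-V(K_2)$ for $b=\beta(V''')$, and it declares $\beta$ ``uniquely defined'' by this. Where you go beyond the paper is the fifth frame component $\beta(X_F)$, and this addition is substantive rather than pedantic. Read literally, as uniqueness among all one-forms on $J^3(\R,\R)$ satisfying \eqref{eq_ord4} and \eqref{cond_ord4}, the corollary requires your step, and your counterexample is valid: for $x^{(4)}=0$ one has $K_2=0$, the $V^{(i)}$ are $\pm\partial_{3-i}$, and every $\beta=u\,dt$ with $X_F^3(u)=0$ (in particular $u=P(t)$ with $P$ quadratic) solves both conditions, so literal uniqueness fails. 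The paper's one-sentence proof implicitly identifies $\beta$ with its restriction to $\spn\{V,V',V'',V'''\}$, or equivalently assumes the normalisation $\beta(X_F)=1$ that every $\beta$ induced by an actual connection satisfies; your derivation of $\beta(X_F)=1$ and $\alpha(X_F)=0$ from condition (2) of Lemma \ref{lemma0} together with \eqref{cond_nabla} is precisely the justification of that convention, and it matches how the corollary is used in Section \ref{sec_twistor}, where the family of totally geodesic paraconformal connections is parametrised by $\alpha$ alone. In short: the paper buys brevity by leaving the $X_F$-component implicit; your version makes the statement correct as written, at the cost of invoking the connection axioms and not merely the two displayed conditions --- which, as your flat example shows, is unavoidable.
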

\begin{proof}
The one-form $\beta$ is uniquely defined by the condition $5b=-V(K_2)$.
\end{proof}

\begin{remark}
It would be nice to have a characterisation of $(F)$ admitting a totally geodesic paraconformal structure in terms of the curvature of the associated torsion-free Bryant connection. The curvature was explicitly computed in \cite{N2}. However, the Bryant connection is an object invariant with respect to the group of contact transformations which is much bigger than the group of point transformations. In fact a class of point equivalent ODEs splits into several classes of point equivalent ODEs. Therefore the problem would be to determine if a given class of contact equivalent ODEs contains a subclass of point equivalent ODEs admitting a totally geodesic paraconformal connection (a priori, the subclass is not unique). The problem is similar, in spirit, to the problem considered in \cite{DK2} where we characterise hyper-CR Einstein-Weyl structures in terms of point invariants. One gets invariants of very high order and a similar result should hold in the present case.
\end{remark}

\section{General case}\label{sec_gen}
In this section we consider an ODE in the form $(F)$. Our main result is as follows

\begin{theorem}\label{thm_gen}
A class of point equivalent ODEs of order $k+1$, $k\geq 4$, admits a totally geodesic paraconformal connection if and only if the W\"unschmann condition holds,
\begin{equation}\label{cartan_gen0}
V^{(i)}(K_{k-1})=0,\qquad i=0,\ldots,k-4,
\end{equation}
\begin{equation}\label{cartan_gen}
4V^{(k-2)}(K_{k-1})+3V^{(k-3)}(K_{k-1}')=0,
\end{equation}
and
\begin{eqnarray}
&&\left(\frac{2}{\gamma_k}-1\right)K_{k-1}V^{(k-3)}(K_{k-1})= (-1)^k\left(V^{(k-2)}(K_{k-1}')-2V^{(k-1)}(K_{k-1})\right),\label{cartan_gen2}\\
&&\left(\frac{2}{3\gamma_k}-1\right)K_{k-1}V^{(k-2)}(K_{k-1}) +\left(\frac{1}{\gamma_k}+\frac{k-3}{2}\right)K_{k-1}'V^{(k-3)}(K_{k-1})\nonumber\\ &&\qquad=(-1)^{k+1}\frac{1}{3}\left(V^{(k-2)}(K_{k-1}'')-V^{(k-1)}(K_{k-1}')+V^{(k)}(K_{k-1})\right),\label{cartan_gen3}
\end{eqnarray}
where $\gamma_k=-\frac{1}{2}\binom{k+2}{3}$. Additionally \eqref{cartan_gen} is equivalent to
$$
4V^{(k-2)}(K_{k-1})-\frac{6}{k-1}V^{(k-3)}(K_{k-2})=0.
$$
\end{theorem}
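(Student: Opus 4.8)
The plan is to follow the blueprint already established in the proofs of Theorems \ref{thm_ord2}, \ref{thm_ord3} and \ref{thm_ord4}, but carry it out in general order $k+1$. By Theorem \ref{thm2}, admitting a totally geodesic paraconformal connection is equivalent to finding a one-form $\beta$ on $J^k(\R,\R)$ solving the single third-order equation \eqref{eq_gen} together with the $k$ algebraic constraints \eqref{cond_gen}, namely $\beta(V)=\cdots=\beta(V^{(k-1)})=0$. The whole problem thus reduces to determining precisely when \eqref{eq_gen} and \eqref{cond_gen} admit a common solution, and extracting the compatibility conditions on $(F)$ as the obstructions.

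First I would set $b=\beta(V^{(k)})$, the single ``free'' component of $\beta$ left after imposing the $k$ constraints \eqref{cond_gen}. Exactly as in the lower-order proofs, I would repeatedly apply the Lie derivative $\mathcal{L}_{X_F}$ to the constraints $\beta(V^{(i)})=0$ and use the structure equation $V^{(k+1)}=\sum_{j=0}^{k-1}(-1)^{j+1}K_jV^{(j)}$ (i.e.\ the definition of the curvatures $K_i$) to express $\beta'(V^{(i)})$, $\beta''(V^{(i)})$ and $\beta'''(V^{(i)})$ for each $i=0,\ldots,k$ in terms of $b$ and its derivatives $b',b'',b'''$, with coefficients that are polynomials in the $K_j$. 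The key bookkeeping point is that differentiating $\beta(V^{(i)})=0$ produces $\beta'(V^{(i)})+\beta(V^{(i+1)})=0$, so only when $i$ reaches $k$ does the nonzero quantity $b$ enter; this is what makes the lowest evaluations give $b$, $b'$, $b''$ algebraically in the $V^{(i)}(K_{k-1})$, while the higher ones produce genuine differential relations.

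Next I would evaluate equation \eqref{eq_gen} on each of the vectors $V,V',\ldots,V^{(k)}$ in turn. Since $\beta$ is constrained to vanish on $V,\ldots,V^{(k-1)}$, the right-hand side $(-1)^kdK_{k-1}$ evaluated on $V^{(i)}$ gives $(-1)^kV^{(i)}(K_{k-1})$, and the left-hand side reduces to the expressions in $b,b',b'',b'''$ found in the previous step. Reading off the evaluations in order $i=0,1,2,\ldots$ yields: first a block of equations forcing $V^{(i)}(K_{k-1})=0$ for the lowest indices, which is precisely \eqref{cartan_gen0}; then the equations that determine $b$ and $b'$ algebraically and whose compatibility (differentiate the relation for $b$ and match it against the relation for $b'$) gives \eqref{cartan_gen}; and finally the two further compatibility relations obtained by comparing the higher derivatives, which after inserting the constant $\gamma_k=-\frac{1}{2}\binom{k+2}{3}$ from \eqref{eq_gen} become \eqref{cartan_gen2} and \eqref{cartan_gen3}. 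The hypothesis $k\geq 4$ is exactly what guarantees enough vectors $V^{(i)}$ below $V^{(k)}$ to produce the vanishing block \eqref{cartan_gen0} as a separate phenomenon from the lower-order cases.

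The main obstacle will be the last step: showing that these conditions are not merely necessary but \emph{sufficient}, i.e.\ that once the W\"unschmann condition and \eqref{cartan_gen0}--\eqref{cartan_gen3} hold, a genuine $\beta$ solving all of \eqref{eq_gen} and \eqref{cond_gen} exists. Here I would argue as in Theorem \ref{thm2}: pass to the rescaled field $fX_F$ with $f$ chosen as in \eqref{eq_wun} so that all $K_i$ vanish and System \eqref{system} collapses to $\beta^{(k-j+2)}=0$, which is trivially solvable, and then check that the displayed conditions are exactly the integrability constraints guaranteeing that the solution respects \eqref{cond_gen}. The genuinely laborious part, which I would relegate to direct computation in coordinates (as the authors do for orders three and four), is verifying that the two higher-order compatibility relations coming from the $i=k-1$ and $i=k$ evaluations are consequences of \eqref{cartan_gen} together with the W\"unschmann condition and do not impose anything new beyond \eqref{cartan_gen2} and \eqref{cartan_gen3}; the final reformulation $4V^{(k-2)}(K_{k-1})-\frac{6}{k-1}V^{(k-3)}(K_{k-2})=0$ then follows by substituting the simplest W\"unschmann relation \eqref{wun_gen}, namely $K_{k-2}+\frac{k-1}{2}K_{k-1}'=0$, into \eqref{cartan_gen}.
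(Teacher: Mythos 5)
Your main computation is the paper's proof. The paper likewise reduces, via Theorem \ref{thm2}, to the solvability of \eqref{eq_gen} subject to \eqref{cond_gen}, writes $\beta=b\,\theta_k$ in the coframe $\theta_0,\ldots,\theta_k$ dual to $V,V',\ldots,V^{(k)}$ (this is the same bookkeeping as your repeated differentiation of the constraints $\beta(V^{(i)})=0$), computes $\beta'$, $\beta'''$ using $V^{(k+1)}=\sum_{j=0}^{k-1}(-1)^{j+1}K_jV^{(j)}$, and evaluates \eqref{eq_gen} on $V,\ldots,V^{(k)}$: the block $i\le k-4$ gives \eqref{cartan_gen0}, the $V^{(k-3)}$- and $V^{(k-2)}$-evaluations force $b$ and yield \eqref{cartan_gen}, the last two evaluations yield \eqref{cartan_gen2}--\eqref{cartan_gen3}, and the final reformulation comes from \eqref{wun_gen} exactly as you say.

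The genuine gap is in your last paragraph. You propose to verify, by direct computation in coordinates, that the relations coming from the $V^{(k-1)}$- and $V^{(k)}$-evaluations ``are consequences of \eqref{cartan_gen} together with the W\"unschmann condition.'' That assertion is precisely the conjecture which the paper states it was \emph{unable} to prove in general (see the Remark following Corollary \ref{cor_ordgen}); it is the reason why Theorem \ref{thm_gen}, unlike Theorems \ref{thm_ord3} and \ref{thm_ord4}, carries \eqref{cartan_gen2} and \eqref{cartan_gen3} as independent hypotheses. The correct task is only to check that these two evaluations, after substituting the forced values of $b,b',b'',b'''$ and using \eqref{wun_gen}, \emph{are} the displayed conditions \eqref{cartan_gen2} and \eqref{cartan_gen3} -- not that they are redundant. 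Moreover, ``direct computation in coordinates'' is not an available fallback here, since $k$ is arbitrary rather than a fixed small order.

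Your sufficiency detour via the rescaled field $fX_F$ is also both unnecessary and unworkable as sketched: after rescaling, all curvatures vanish, whereas the conditions \eqref{cartan_gen0}--\eqref{cartan_gen3} are expressed through $K_{k-1}$, $K_{k-2}$ and $V^{(i)}$ taken in the \emph{original} frame, so ``checking that the displayed conditions are exactly the integrability constraints'' would require a nontrivial change-of-frame translation -- i.e.\ it re-poses the whole theorem. No detour is needed: sufficiency follows from the very same evaluation computation. Given the conditions, set $b:=(-1)^k2V^{(k-3)}(K_{k-1})/\binom{k+2}{3}$ (the $V^{(k-3)}$-evaluation, cf.\ Corollary \ref{cor_ordgen}) and $\beta:=b\,\theta_k+c\,\tau$, where $\tau$ is the form dual to $X_F$; the conditions \eqref{cartan_gen0}--\eqref{cartan_gen3} are then literally the statements that \eqref{eq_gen} holds on $V,\ldots,V^{(k)}$, while the remaining evaluation on $X_F$ is a third-order ODE for $c$ along integral curves of $X_F$, always locally solvable (one checks $\mathcal{L}_{X_F}\tau=0$, so this component decouples).
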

\begin{proof}
Let $\theta_0,\theta_1,\ldots,\theta_k$ be one-forms dual to vector fields $V,V',\ldots,V^{(k)}$. Assume that $\beta(V_k)=b$. Then $\beta=b\theta_k$. We compute that $\theta_k'=-\theta_{k-1}$, $\theta_k''=\theta_{k-2}+(-1)^{k+1}K_{k-1}\theta_k$ and $\theta_k'''=-\theta_{k-3}+(-1)^kK_{k-1}+(-1)^{k+1}(K_{k-1}'+K_{k-2})\theta_k$. Thus
$$
\beta'=b'\theta_k-b\theta_{k-1}
$$
and
\begin{eqnarray*}
&&\beta'''=\left(b'''+3(-1)^{k+1}b'K_{k-1}+(-1)^{k+1}b(K_{k-1}'+K_{k-2})\right)\theta_k+\\ &&\qquad\left(-3b''+(-1)^kK_{k-1}\right)\theta_{k-1}+3b'\theta_{k-2}-b\theta_{k-3}.
\end{eqnarray*}
Substituting this to \eqref{eq_gen}, evaluating on $V_0,\ldots,V_k$, and using the W\"unschmann condition \eqref{wun_gen} we get the conditions \eqref{cartan_gen0}-\eqref{cartan_gen2} in a way analogous to lower dimensional cases.
\end{proof}

\begin{corollary}\label{cor_ordgen}
If an ODE of order $k+1>4$ admits a totally geodesic paraconformal connection then a solution $\beta$ to \eqref{eq_gen} and \eqref{cond_gen} is unique.
\end{corollary}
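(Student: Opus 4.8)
The plan is to reduce the existence of $\beta$ to the existence of a single scalar function and then pin that function down algebraically from one component of equation \eqref{eq_gen}. First I would invoke condition \eqref{cond_gen}: since $\beta$ annihilates $V,V',\ldots,V^{(k-1)}$, and these together with $V^{(k)}$ form a frame where the construction is regular, $\beta$ must be proportional to the dual one-form $\theta_k$ introduced in the proof of Theorem \ref{thm_gen}, that is $\beta=b\,\theta_k$ with $b=\beta(V^{(k)})$. Thus uniqueness of $\beta$ is equivalent to uniqueness of the single function $b$.

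Next I would read off from the expansion of $\beta'''$ already computed in the proof of Theorem \ref{thm_gen} that the lowest coframe term is $-b\,\theta_{k-3}$, so that $\beta'''(V^{(k-3)})=-b$. Evaluating \eqref{eq_gen} on $V^{(k-3)}$ — here the hypothesis $k+1>4$, i.e.\ $k\geq 4$, guarantees $k-3\geq 1$, so that $V^{(k-3)}$ is a genuine frame vector lying strictly below $V^{(k-1)}$ — the terms $K_{k-1}'\beta$ and $K_{k-1}\beta'$ drop out, because $\beta(V^{(k-3)})=0$ and, since $\beta'=b'\theta_k-b\theta_{k-1}$ with $k-3<k-1$, also $\beta'(V^{(k-3)})=0$. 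Only $\gamma_k\,\beta'''(V^{(k-3)})=-\gamma_k b$ survives on the left, and $(-1)^kV^{(k-3)}(K_{k-1})$ on the right, yielding the purely algebraic relation $-\gamma_k b=(-1)^kV^{(k-3)}(K_{k-1})$. Since $\gamma_k=-\frac{1}{2}\binom{k+2}{3}\neq 0$, this determines $b$ uniquely, hence $\beta=b\,\theta_k$ is unique. For $k=3$ the same computation specialises to $5b=-V(K_2)$ and recovers Corollary \ref{cor_ord4}; the present statement simply records the cases $k\geq 4$.

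The only delicate point — and the one I would verify most carefully — is that no other term of \eqref{eq_gen} contributes at $V^{(k-3)}$. This is exactly what \eqref{cond_gen} guarantees: because $k-3$ is strictly smaller than $k-1$, both $\beta$ and $\beta'$ vanish on $V^{(k-3)}$, leaving a single clean equation for $b$. I would also note, as a consistency check rather than a genuine obstacle, that evaluating \eqref{eq_gen} on the lower frame vectors $V^{(i)}$ with $i<k-3$ produces no new information about $b$; since $\beta'''$ has no $\theta_i$-component for $i<k-3$, these evaluations instead reproduce the compatibility conditions \eqref{cartan_gen0}, which is precisely consistent with the standing hypothesis that a totally geodesic paraconformal connection exists.
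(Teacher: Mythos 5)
Your proposal is correct and follows essentially the same route as the paper: it reduces $\beta$ to the single scalar $b=\beta(V^{(k)})$ via $\beta=b\,\theta_k$ and then evaluates \eqref{eq_gen} on $V^{(k-3)}$, where the $K_{k-1}'\beta$ and $K_{k-1}\beta'$ terms vanish, yielding the algebraic relation $\binom{k+2}{3}b=(-1)^k2V^{(k-3)}(K_{k-1})$ that fixes $b$ uniquely — exactly the paper's argument. Your additional observations (the specialisation to $5b=-V(K_2)$ for $k=3$ and the consistency check that evaluation on $V^{(i)}$, $i<k-3$, reproduces \eqref{cartan_gen0}) are accurate but not needed.
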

\begin{proof}
The one-form $\beta$ is uniquely defined by the condition $\binom{k+2}{3}b=(-1)^k2V^{(k-3)}(K_{k-1})$ which is obtained by evaluation of \eqref{eq_gen} on $V^{(k-3)}$.
\end{proof}

\begin{remark}
Our conjecture is that the equations \eqref{cartan_gen2} and \eqref{cartan_gen3} are redundant and follow from \eqref{cartan_gen0}, \eqref{cartan_gen} and the W\"unschmann condition. However, we were unable to prove it in full generality.
\end{remark}

\section{Ricci curvature}\label{sec_ricci}
Let $\nabla$ be a totally geodesic paraconformal connection associated to $(F)$ of order $k+1\geq 3$. Direct computations show
$$
R(\nabla)(Y_1,Y_2)V=d\alpha(Y_1,Y_2)V+d\beta(Y_1,Y_2)V'-\beta\wedge\alpha'(Y_1,Y_2)V-\beta\wedge\beta'(Y_1,Y_2)V'
$$
and one gets
$$
Ric(\nabla)(V,V)=d\beta(V',V)-\beta\wedge\beta'(V',V).
$$
But, if $\beta(V)=\beta(V')=0$ then the right hand side vanishes (we use here $[V,V']\in\spn\{V,V'\}$) and therefore
\begin{equation}\label{eq_einstein}
Ric(\nabla)(V,V)=0.
\end{equation}
It follows that the symmetric part of the Ricci tensor of $\nabla$ is a section of the bundle of symmetric 2-tensors annihilating the field of null cones $x\mapsto C(x)$ of the paraconformal structure.

The bundle has rank one in the case of dimension 3. In fact it coincides with the conformal class $[\mathbf{g}]$. It follows that \eqref{eq_einstein} is equivalent to the Einstein-Weyl equation in this case.

The situation is more complicated in higher dimensions because the bundle of symmetric tensors annihilating the null cone has rank bigger than 1 (e.g. the rank is three in the case of dimension 4). It is an interesting question to determine if the condition \eqref{eq_einstein} implies that $\nabla$ is totally geodesic (it is the case in dimension 3).

\section{Veronese webs}\label{sec_Veronese}

A particularly simple example of paraconformal structures admitting totally geodesic connections can be obtained from special families of foliations, called \emph{Veronese webs}. We shall ultimately show that the structures are described by solutions to the integrable system \eqref{eq_int_sys}.

The Veronese webs are one-parameter families of foliations introduced by Gelfand and Zakharevich \cite{GZ} in connection to bi-Hamiltonian systems on odd-dimensional manifolds. Precisely, a one-parameter family of foliations $\{\FF_t\}_{t\in\R}$ of co-dimension 1 on a manifold $M$ of dimension $k+1$ is called Veronese web if any $x\in M$ has a neighbourhood $U$ such that there exists a co-frame $\omega_0,\ldots,\omega_k$ on $U$ such that
$$
T\FF_t=\ker\left(\omega_0+t\omega_1+\cdots+t^k\omega_k\right).
$$ 
In \cite{K2} we proved that there is a one to one correspondence between Veronese webs and ODEs for which all curvatures $K_i$ vanish. The equations are given modulo time-preserving contact transformations, mentioned earlier in Section \ref{sec_ODEinv}. But if all $K_i=0$ then automatically all conditions given in Theorems \ref{thm_ord2}, \ref{thm_ord3}, \ref{thm_ord4} and \ref{thm_gen} are satisfied. Therefore all Veronese webs admit totally geodesic paraconformal connections. The paraconformal structures obtained in this way will be referred to as \emph{of Veronese type}.

The structures are very specific. They correspond to projective structures defined by connections with skew-symmetric Ricci tensor in the case of order 2 (see \cite{K3}), and to Einstein-Weyl structures of hyper-CR type in the case of order 3 (see \cite{DK1}). The connections in the case of order 2 are projectively equivalent to the Chern connections of classical 3-webs \cite{K3}. The hyper-CR Einstein-Weyl structures are connected to integrable equations of hydrodynamic type and have Lax pairs with no terms in the direction of a spectral parameter \cite{D}. A characterisation of this special Einstein-Weyl structures in terms of point invariants of the related ODEs is complicated and involves 4 additional invariants of high order \cite{DK2}. In any case, Veronese webs define exactly those totally geodesic paraconformal structures for which the corresponding twistor space fibres over $\R P^1$ (c.f. \cite{D,DK1}).

Let $\omega(t)=\omega_0+t\omega_1+\cdots+t^k\omega_k$. Then the curve $t\mapsto \R\omega(t)\in P(T^*M)$ is a Veronese curve dual to the curve $t\mapsto\R V(t)\in P(TM)$
$$
V(t)=V_0+tV_1+\ldots+t^kV_k
$$
of null directions defining a paraconformal structure in Section \ref{sec_paraconf}. The duality means that
$$
V(t)\in\ker\omega(t)\cap\ker\omega'(t)\cap\ldots\cap\ker\omega^{(k-1)}(t)
$$
and conversely
$$
\ker\omega(t)=\VV_k(t)=\spn\{V(t),V'(t),\ldots,V^{(k-1)}(t)\}.
$$
In the case of Veronese webs the parameter $t$ is well defined globally uniquely modulo the M\"obius transformations $t\mapsto\frac{at+b}{ct+d}$, where $a,b,c,d\in\R$ and $ad-bc\neq 0$. Moreover, the distributions $\VV_k(t)$ are integrable for any particular choice of $t$. It means that
\begin{equation}\label{eq_integrability}
\omega(t)\wedge d\omega(t)=0,
\end{equation}
for any $t$. In order to get useful formulae we note that due to the integrability condition one can choose local coordinates $x_0,\ldots,x_k$ on $M$ such that $T\FF_{t_i}=\ker dx_i$ for some fixed $t_0,\ldots,t_k\in\R$. If we also assume that
$$
T\FF_{t_{k+1}}=\ker dw
$$
for a function $w=w(x_0,\ldots,x_k)$ then one verifies that
\begin{equation}\label{eq_omega}
\omega(t)=\sum_{i=0}^k (t_{k+1}-t_i)\prod_{j\neq i}(t-t_j)\partial_iwdx_i
\end{equation}
and $\omega$ is given up to a multiplication by a function on $M$. The following theorem for $k=2$ was proved in \cite{Z} and \cite{DK1}. It is new for $k>2$.

\begin{theorem}\label{thm3}
Let $M$ be a manifold of dimension $k+1$ with local coordinates $x_0,\ldots,x_k$ and let $t_0,\ldots,t_{k+1}\in\R$ be distinct numbers. Then, any $w$ satisfying the system
\begin{equation}\label{eq_hirota}
\sum_{cycl(i,j,l)}a_{ij,l}\partial_i\partial_j w\partial_l w=0,\qquad 0\leq i<j<l\leq k,
\end{equation}
where
$$
a_{ij,l}=(t_i-t_j)(t_{k+1}-t_l),
$$
defines a paraconformal structure via \eqref{eq_omega}, and conversely, any paraconformal structure of Veronese type can be locally put in this form. Moreover:
\begin{enumerate}
\item If $k>2$ then all totally geodesic paraconformal connections for a Veronese web are given by the formula
$$
\nabla\partial_i=\left(\frac{d(\partial_iw)}{\partial_iw}+\alpha\right)\partial_i
$$
where $\alpha$ is an arbitrary one-form on $M$. Moreover, there is the unique $\alpha$ such that the torsion of the corresponding paraconformal connection satisfies
$$
T(\nabla)(V(t),V'(t))\in\spn\{V(t)\},\qquad t\in\R.
$$
\item If $k=2$ then a Veronese web defines the following conformal metric
$$
\mathbf{g}=\sum_{i,j=0}^2(t_3-t_i)(t_3-t_j)\left(t_i^2+t_j^2-t_it_j-\sum_{l=0}^2t_l^2\right) \partial_iw\partial_jwdx_idx_j
$$
and there is the unique torsion-free totally geodesic paraconformal connection $\nabla$ such that $([\mathbf{g}], \nabla)$ is an Einstein-Weyl structure. The Weyl one-form for $\nabla$ is given by
$$
\varphi=\left(\frac{\partial_0\partial_1w}{\partial_1w}+\frac{\partial_0\partial_2w}{\partial_2w}\right)dx_0 +\left(\frac{\partial_0\partial_1w}{\partial_0w}+\frac{\partial_1\partial_2w}{\partial_2w}\right)dx_1 +\left(\frac{\partial_0\partial_2w}{\partial_0w}+\frac{\partial_1\partial_2w}{\partial_1w}\right)dx_2.
$$
\item If $k=1$ then for a given Veronese web there is the unique torsion-free parconformal connection satisfying the additional condition $\nabla_Y V(t)\in\{V(t)\}$ for any vector field $Y$ and any $t\in\R$. The connection is given by
$$
\nabla \partial_0=\left(\frac{\partial_0\partial_0w}{\partial_0w}-\frac{\partial_0\partial_1w}{\partial_1w}\right)dx_0\partial_0, \qquad\nabla \partial_1=\left(\frac{\partial_1\partial_1w}{\partial_1w}-\frac{\partial_0\partial_1w}{\partial_0w}\right)dx_1\partial_1.
$$
\end{enumerate}
\end{theorem}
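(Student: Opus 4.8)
\emph{Overview.} The plan is to establish the equivalence of \eqref{eq_hirota} with the integrability \eqref{eq_integrability} of the web first, and then to read off the three connection statements from the general theory of Sections \ref{sec_ODEcon}--\ref{sec_gen} together with an explicit coordinate formula for the null curve. For the correspondence I would start from \eqref{eq_omega}, writing $\omega(t)=\sum_i f_i(t)\,\partial_iw\,dx_i$ with $f_i(t)=(t_{k+1}-t_i)\prod_{j\neq i}(t-t_j)$, and expand $\omega(t)\wedge d\omega(t)$. The coefficient of a fixed $dx_a\wedge dx_b\wedge dx_c$ is an alternating combination of the products $f_i(t)f_m(t)$; the decisive algebraic step is that, using $\prod_{j\neq m}(t-t_j)=P(t)/(t-t_m)$ with $P=\prod_j(t-t_j)$, each combination factors as $f_a(f_c-f_b)=R_{abc}(t)\,(t_{k+1}-t_a)(t_c-t_b)$ and cyclically, where $R_{abc}(t)=(t_{k+1}-t)\prod_{j\in\{a,b,c\}}(t-t_j)\prod_{j\notin\{a,b,c\}}(t-t_j)^2$ is common to the three terms of the triple and not identically zero. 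Hence the coefficient equals $R_{abc}(t)$ times a $t$-independent bracket, and $\omega(t)\wedge d\omega(t)=0$ for all $t$ collapses, triple by triple, to the vanishing of that bracket, which is exactly \eqref{eq_hirota} with $a_{ij,l}=(t_i-t_j)(t_{k+1}-t_l)$. For the converse I would use Frobenius to introduce coordinates with $\FF_{t_i}=\ker dx_i$ and $\FF_{t_{k+1}}=\ker dw$; the $dx_i$ are independent because $k+1$ points on a rational normal curve of degree $k$ are in general position, and \eqref{eq_omega} is then the unique (up to scale) degree-$k$ interpolation of $\omega$ through $t_0,\dots,t_{k+1}$.

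Next I would solve the duality $\omega(s)(V(t))=c(t)(s-t)^k$ by Lagrange interpolation in the basis $\{f_i\}$, obtaining $V(t)=\sum_i\lambda_i(t)\,(\partial_iw)^{-1}\partial_i$ with $\lambda_i(t)\propto(t-t_i)^k$, so that $(\partial_iw)^{-1}\partial_i$ is the natural frame. For $k>2$ all curvatures $K_i$ vanish (by \cite{K2}), so Corollaries \ref{cor_ord4} and \ref{cor_ordgen} force $\beta=0$; translated through the $GL(2,\R)$-structure this says that every totally geodesic paraconformal connection satisfies $\nabla_YV(t)=\alpha(Y)V(t)$ for a single $t$-independent one-form $\alpha$. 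A short computation shows that the connection $\hat\nabla$ with $\hat\nabla_Y\partial_i=d\log(\partial_iw)(Y)\,\partial_i$ makes each $(\partial_iw)^{-1}\partial_i$, hence every $V(t)$ and its $t$-derivative $V'(t)$, parallel; since $\{V(t_0),\dots,V(t_k)\}$ is a frame, $\hat\nabla$ is pinned down and $\nabla=\hat\nabla+\alpha\otimes\mathrm{Id}$ is the general solution, i.e.\ exactly the stated formula with $\alpha$ arbitrary. For the normalising $\alpha$ I would first note that, because every $\FF_t$ is integrable, the intersections of the $\VV_k(t_m)$ are integrable and their limits show that each $\VV_i(t)$ is integrable; in particular $[V(t),V'(t)]\in\spn\{V(t),V'(t)\}$, so $T(\nabla)(V(t),V'(t))=-[V(t),V'(t)]+\alpha(V(t))V'(t)-\alpha(V'(t))V(t)$ already lies in $\spn\{V(t),V'(t)\}$, and the condition $T(\nabla)(V(t),V'(t))\in\spn\{V(t)\}$ fixes $\alpha(V(t))$ for every $t$, hence all $\alpha(\partial_i)$, uniquely.

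For $k=2$ the paraconformal structure is a conformal Lorentzian metric, recovered as the unique (up to scale) symmetric $2$-tensor with $\mathbf{g}(V(t),V(t))=0$ for all $t$; substituting $V(t)=\sum_i\lambda_i(t)(\partial_iw)^{-1}\partial_i$ and matching the resulting degree-$4$ polynomial in $t$ coefficientwise yields the displayed $\mathbf{g}$. The torsion-free totally geodesic connection is then the Weyl connection; writing it through $\nabla\mathbf{g}=\varphi\mathbf{g}$ and imposing that every $\FF_t=\ker\omega(t)$ be totally geodesic determines $\varphi$, which I would match to the stated expression, its Einstein--Weyl property and uniqueness following from Theorem \ref{thm_ord3} and Section \ref{sec_ricci}. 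For $k=1$ the structure is projective and $V(t)$ is linear in $t$; here I would simply verify by direct substitution that the displayed connection is torsion-free and satisfies $\nabla_YV(t)\in\spn\{V(t)\}$ for all $t$, the two properties that characterise it uniquely.

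\emph{Main obstacle.} The decisive step is the factorisation in the first paragraph: showing that the $t$-dependence of $\omega(t)\wedge d\omega(t)$ factors out as one polynomial $R_{abc}(t)$ shared by all terms of a given triple, so that an a priori degree-$2k$ family of conditions collapses to the single system \eqref{eq_hirota}. The most calculation-heavy points are the explicit determination of $\varphi$ when $k=2$ and the verification that the higher-order consequences of \eqref{eq_gen} are automatically satisfied once $\beta=0$; the latter is, however, guaranteed abstractly by Theorems \ref{thm_ord4} and \ref{thm_gen}, so no genuinely new obstruction arises there beyond bookkeeping.
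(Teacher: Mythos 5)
Your reduction of \eqref{eq_integrability} to \eqref{eq_hirota} is exactly the paper's argument: your factorisation $f_a(f_c-f_b)=R_{abc}(t)\,(t_{k+1}-t_a)(t_c-t_b)$ with the common polynomial $R_{abc}$ is precisely the paper's identity $(T_i-T_j)T_l=a_{ij,l}P_{ijl}(t)$, and your converse via Frobenius coordinates is also the paper's. For $k>2$, your route to $\beta=0$ (all curvatures vanish, then Corollaries \ref{cor_ord4} and \ref{cor_ordgen} force $b=0$) is a legitimate variant of the paper's degree count ($\beta$ is quadratic in $t$ by \eqref{system} while $\omega$ has degree $k$), and the resulting classification $\nabla\partial_i=\left(d(\partial_iw)/\partial_iw+\alpha\right)\partial_i$ agrees; the $t$-independence of $\alpha$, which you assert in passing, does follow from \eqref{system} (or from primitivity of the polynomial curve $V(t)$), so that is only a presentational lacuna.

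The genuine gap is in the normalisation claim for $k>2$: you establish uniqueness of $\alpha$ but not its existence, and existence is the hard half. Writing $[V(t),V'(t)]=g(t)V(t)+h(t)V'(t)$ (legitimate, by the integrability you invoke), the torsion condition is equivalent to $\alpha(V(t))=h(t)$ for \emph{every} $t$. For a $t$-independent one-form $\alpha$ the function $t\mapsto\alpha(V(t))=\sum_i t^i\alpha(V_i)$ is automatically a polynomial of degree at most $k$, whereas a priori $h(t)$ is only a rational function of $t$ without real poles: $[V(t),V'(t)]$ has degree up to $2k-1$, and solving for its $V'(t)$-component introduces denominators. So the system ``$\alpha(V(t))=h(t)$ for all $t$'' is overdetermined: defining $\alpha$ by interpolation at $k+1$ values of $t$, as your argument implicitly does, gives no guarantee at the remaining values, and for $k>2$ this is a nontrivial constraint. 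This consistency statement is exactly where the paper invests its effort: it proves integrability of \emph{all} the distributions $\VV_i(t)$ (Lemma \ref{lemma2}), upgrades this to \eqref{eq_vv} for all pairs $i<j$ (you only use the single case $[V,V']\in\spn\{V,V'\}$), normalises $\omega$ so that $\omega^{(i)}(t)(V^{(j)}(t))=\pm\delta_i^{k-j}$, writes $h(t)=\pm\,\omega^{(k-1)}(t)([V(t),V'(t)])$, and differentiates $k+1$ times in $t$; every surviving term is killed by \eqref{eq_vv} together with the degree-$k$ polynomiality of $\omega$ and $V$, giving $h^{(k+1)}=0$ and hence $\deg h\leq k$. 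Without this (or an equivalent) degree bound your candidate $\alpha$ may simply fail to satisfy the torsion condition off the interpolation points. The same existence--uniqueness asymmetry recurs, more mildly, at $k=1$, where you verify that the displayed connection works but only assert that torsion-freeness plus $\nabla_YV(t)\in\spn\{V(t)\}$ pins it down; there, however, the missing computation is three lines, whereas for $k>2$ it is the core of the proof.
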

\begin{proof}
The integrability condition \eqref{eq_integrability} written in coordinates and in terms of the function $w$ takes the form
$$
\sum_{i<j<l}\left(\sum_{cycl(i,j,l)}(T_i-T_j)T_l\partial_i\partial_j w\partial_lw\right) dx_i\wedge dx_j\wedge dx_l=0
$$
where
$$
T_i=(t_{k+1}-t_i)\prod_{j\neq i}(t-t_j).
$$
Thus, for any $i<j<l$ one gets the equation
\begin{equation}\label{eq_hirota0}
\sum_{cycl(i,j,l)}(T_i-T_j)T_l\partial_i\partial_j w\partial_l w=0
\end{equation}
which should be satisfied for any $t\in \R$. But the coefficient $(T_i-T_j)T_l$ equals to
$$
a_{ij,l}P_{ijl}(t,t_1,\ldots,t_k,t_{k+1})
$$
where
$$
P_{ijl}=(t-t_i)(t-t_j)(t-t_l)(t-t_{k+1})\prod_{s\neq i,j,l}(t-t_s)^2
$$
is a polynomial that does not depend on the permutation of indices $(i,j,l)$ and has zeroes (with multiplicities) exactly at points $t_0,\ldots,t_{k+1}$. Thus, for $t\in\{t_0,\ldots,t_{k+1}\}$  we get that the condition \eqref{eq_hirota0} is void and for $t\notin\{t_0,\ldots,t_{k+1}\}$ the condition \eqref{eq_hirota0} reduces to \eqref{eq_hirota}. 

The converse statement follows from the fact that any Veronese web can be written down as \eqref{eq_omega} in some coordinate system.

The formulae for $\nabla$ can be computed in the following way. We have $\nabla V(t)=\alpha V(t)+\beta V'(t)$ where the one-forms $\alpha$ and $\beta$, a priori, depend on $t$. However, in the Veronese case system \eqref{system} gives $\alpha'=-\frac{k}{2}\beta''$ and $\beta'''=0$. It follows that $\beta$ is a polynomial of degree 2 in $t$. Moreover, in the case of Veronese webs $V(t)$ considered on $M$ as in Section \ref{sec_paraconf} satisfies \eqref{cond_gen} and it means that $\beta=f\omega$ for some function $f$ on $M$. Hence, comparing the degrees of polynomials, we conclude $f=0$ for $k>2$. Consequently $\alpha$ does not depend on $t$. Therefore $\nabla V(t)=\alpha V(t)$ and in coordinates we get $\nabla\partial_i=\left(\frac{d(\partial_iw)}{\partial_iw}+\alpha\right)\partial_i$.

Now we shall prove that the condition $T(\nabla)(V(t),V'(t))\in\spn\{V(t)\}$ normalises $\alpha$ uniquely. First we prove that if $i<j$ then
\begin{equation}\label{eq_vv}
[V^{(i)}(t), V^{(j)}(t)]\in\spn\{V^{(i)}(t),V^{(i+1)}(t),\ldots, V^{(j)}(t)\}
\end{equation}
for any fixed $t$. This follows from the following
\begin{lemma}\label{lemma2}
All distributions $\VV_i(t)=\spn\{V(t),V'(t),\ldots,V^{(i-1)}(t)\}$, where $t\in\R$ is fixed, are integrable.
\end{lemma} 
\begin{proof}
Note that $\VV_i(t)=\ker\{\omega(t),\omega'(t),\ldots,\omega^{k-i}(t)\}$. Moreover, if $i=k$ then $\VV_k(t)$ is integrable by definition. This is expressed by \eqref{eq_integrability}. Now, we proceed by induction and prove that
$$
d\omega^{(i)}\wedge\omega\wedge\omega'\wedge\ldots\wedge\omega^{(i)}=0.
$$ 
Assuming that the formula above is true and differentiating it we get
$$
d\omega^{(i+1)}\wedge\omega\wedge\omega'\wedge\ldots\wedge\omega^{(i)}+
d\omega^{(i)}\wedge\omega\wedge\omega'\wedge\ldots\wedge\omega^{(i-1)}\wedge\omega^{(i+1)}=0.
$$
Finally, multiplying by $\omega^{(i+1)}$ the second term vanishes and we get
$$
d\omega^{(i+1)}\wedge\omega\wedge\omega'\wedge\ldots\wedge\omega^{(i+1)}=0.
$$ 
\end{proof}
Now, to prove \eqref{eq_vv} it is sufficient to consider $t=0$ only because $GL(2,\R)$ acts on $t$ and all points in the projective line are equally good. We have $\VV_i(0)=\spn\{V_0,\ldots,V_{i-1}\}$. But also we have $\VV_i(\infty)=\spn\{V_k,V_{k-1},\ldots,V_{k-i+1}\}$. Thus, applying Lemma \ref{lemma2} to $\VV_j(0)$ and $\VV_i(\infty)$, we get that the intersection $\VV_j(0)\cap\VV_i(\infty)$ is integrable and \eqref{eq_vv} holds.

Let us consider $T(\nabla)(V(t),V'(t))\mod V(t)$. We have
$$
T(\nabla)(V(t),V'(t))=\alpha(V(t))V'(t)-h(t)V'(t)\mod V(t) 
$$
where $h(t)$ is defined by $[V(t),V'(t)]=h(t)V'\mod V(t)$. Since $\alpha$ is independent of $t$ and $V(t)$ is a polynomial of degree $k$ in $t$ it is sufficient to show that $h(t)$ is a polynomial of degree $k$ in $t$. If it is the case, then $\alpha(V(t))=h(t)$ fixes $\alpha$ uniquely. But it is easy to show that \eqref{eq_vv} implies that $h(t)$ is a polynomial of degree $k$ in $t$. Indeed, we can normalise $\omega$ such that $\omega(t)(V^{(k)}(t))=1$. Then a simple induction gives $\omega^{(i)}(t)(V^{(j)}(t))=\pm\delta_i^{k-j}$ for all $i,j=0,\ldots,k$. In particular $h(t)=\pm \omega^{(k-1)}(t)([V(t),V'(t)])$. Differentiating this equation $k+1$ times, using \eqref{eq_vv} and the fact that $\omega$ and $V$ are polynomials of degree $k$ in $t$ we get that $h^{(k+1)}(t)=0$. This completes the proof in the case $k>2$.

In the case $k=2$ the theorem follows from \cite{DK1}. Here, we present a sketch of a different, more direct, proof. We have $\beta=f\omega$ and $\alpha=\tilde\alpha-ftk\omega''$, where $\tilde\alpha$ is a one-form on $M$. Simple but long computations prove that $\nabla$ is torsion-free if and only if
$$
f=\frac{1}{4}\left(\frac{\partial_0\partial_1w}{\partial_0w\partial_1w}- \frac{\partial_1\partial_2w}{\partial_1w\partial_2w}\right)\frac{1}{(t_3-t_1)(t_0-t_2)}
$$
and
$$
\tilde\alpha=-\frac{1}{4}\sum_{cycl(0,1,2)}\left( \frac{t_1-3t_2}{t_1-t_2}\frac{\partial_0\partial_1w}{\partial_1w} +\frac{t_2-3t_1}{t_2-t_1}\frac{\partial_0\partial_2w}{\partial_2w}\right)dx_0.
$$
The formula for $f$ does not depend on the permutation of indices $(0,1,2)$ due to \eqref{eq_hirota}. Having $f$ and $\tilde\alpha$ one has all ingredients necessary for the computation of $\nabla\mathbf{g}$ and consequently $\varphi$. The so-obtained connection satisfies the Einstein-Weyl equation due to results of Cartan.

In the case $k=1$ the one-form $\beta$ is linear in $t$ and equals $f\omega$ for some function $f$. It follows that $\alpha$ does not depend on $t$ and the vanishing of the torsion gives
$$
\alpha=-\left(\frac{\partial_0\partial_1w}{\partial_1w}-2f(t_2-t_0)\partial_0w\right)dx_0 -\left(\frac{\partial_0\partial_1w}{\partial_0w}-2f(t_2-t_1)\partial_1w\right)dx_1.
$$
It can be shown that the so obtained connection satisfies $\nabla_Y V(t)\in\spn\{V(t)\}$ for any $Y$ if and only if $f=0$. For $f\neq 0$ we only have $\nabla_{V(t)} V(t)\in\spn\{V(t)\}$. 
\end{proof}

We shall say that the unique connection from Theorem \ref{thm3} for $k>2$ is \emph{canonical}. Note that the unique connection in the case $k=1$ fits into the scheme $k>2$. On the other hand the unique torsion-free Weyl connection in the case $k=2$ is different since, in general, the associated one-form $\beta$ is non-trivial. However, in the case $k=2$ one can consider connections such that $\beta=0$ as well, and among them there is a unique one with the torsion normalised as in the case $k>2$. Another approach to canonical connections for Veronese webs has been recently proposed by A.~Panasyuk (personal communication).

The connection for $k=1$ is exactly the Chern connection of a 3-web \cite{K2}. The connection for $k=2$ is exactly the hyper-CR connection from \cite{DK1} (in \cite{DK1} the conformal class is defined by our $\mathbf{g}$ multiplied by $(w_0w_1w_2)^{-1}$ and consequently the one-form $\phi$ from \cite{DK1} equals $\varphi-d\ln(w_0w_1w_2)$). If $k>2$ we get the following result, which in particular shows that the totally geodesic paraconformal connections are not the torsion free connections used in \cite{B}.
\begin{corollary}\label{cor_webs}
If $k>2$ then for a non-flat paraconformal structure of Veronese type all totally geodesic paraconformal connections have non-vanishing torsion. In particular a Veronese web is flat if and only if the corresponding canonical connection is torsion-free.
\end{corollary}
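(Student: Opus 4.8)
The plan is to work in the explicit coordinates of Theorem \ref{thm3}. Since $k>2$, part~1 of that theorem gives $\beta=0$ and tells us that every totally geodesic paraconformal connection has the diagonal form $\nabla_Y\partial_i=\omega_i(Y)\partial_i$ with $\omega_i=\frac{d(\partial_iw)}{\partial_iw}+\alpha$, the one-form $\alpha$ being the only remaining freedom. First I would compute the torsion on the commuting coordinate frame. Since $[\partial_i,\partial_j]=0$,
$$
T(\nabla)(\partial_i,\partial_j)=\left(\frac{\partial_i\partial_jw}{\partial_jw}+\alpha_i\right)\partial_j-\left(\frac{\partial_i\partial_jw}{\partial_iw}+\alpha_j\right)\partial_i,\qquad \alpha_i:=\alpha(\partial_i).
$$
Hence $T(\nabla)=0$ is achievable for some choice of $\alpha$ if and only if $\alpha_i=-\frac{\partial_i\partial_jw}{\partial_jw}$ holds for every $j\neq i$; equivalently, $\frac{\partial_i\partial_jw}{\partial_jw}$ must be independent of $j$ for each fixed $i$. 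Call this condition $(*)$. Thus the whole corollary reduces to showing that $(*)$ is equivalent to flatness of the Veronese web.

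The key step is that under $(*)$ the connection is not merely torsion-free but flat. On the diagonal frame one computes directly $R(\nabla)(\partial_a,\partial_b)\partial_i=d\omega_i(\partial_a,\partial_b)\,\partial_i$, and since $\frac{d(\partial_iw)}{\partial_iw}=d\ln\partial_iw$ is closed we get $d\omega_i=d\alpha$ for every $i$, so $R(\nabla)=d\alpha\otimes\Id$. I would then show $d\alpha=0$ whenever $(*)$ holds: given a pair $a\neq b$, because $k>2$ there is an index $j\notin\{a,b\}$, and writing $\alpha_a=-\frac{\partial_a\partial_jw}{\partial_jw}$, $\alpha_b=-\frac{\partial_b\partial_jw}{\partial_jw}$ with the \emph{same} $j$, the common third-derivative term cancels and $\partial_a\alpha_b-\partial_b\alpha_a=0$. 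Therefore $(*)$ forces both $T(\nabla)=0$ and $R(\nabla)=0$, so $\nabla$ is a flat paraconformal connection; since the null cone field is preserved by parallel transport and $\nabla$ is flat, in flat coordinates the cone field is constant, so the structure is the standard model and the web is flat. This is the heart of the argument and the main obstacle: it is exactly here that $k>2$ is essential, since for $k=2$ one has $\beta\neq 0$ and the diagonal form above breaks down.

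It remains to assemble the equivalences. For the converse, if the web is flat then in the standard model the trivial flat connection is a torsion-free totally geodesic paraconformal connection, whence $(*)$ holds. Combining both directions, $(*)$ holds if and only if the web is flat; equivalently, the web is non-flat precisely when no choice of $\alpha$ annihilates the torsion, which is the assertion that every totally geodesic paraconformal connection has non-vanishing torsion. Finally, for the ``in particular'' clause I would invoke the uniqueness of the canonical connection from Theorem \ref{thm3}: whenever a torsion-free connection exists it satisfies $T(\nabla)(V(t),V'(t))=0\in\spn\{V(t)\}$, so by that normalisation it must be the canonical one. Hence the web is flat if and only if the canonical connection is torsion-free, completing the proof.
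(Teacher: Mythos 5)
Your proof is correct, and its key step takes a genuinely different route from the paper's. Both arguments begin identically: Theorem \ref{thm3}(1) gives the diagonal form $\nabla\partial_i=\left(d\ln\partial_iw+\alpha\right)\partial_i$, and vanishing of the torsion on the coordinate frame forces $\alpha(\partial_i)=-\partial_i\partial_jw/\partial_jw$ for every $j\neq i$. From there the paper stays at the level of the function $w$: comparing the two expressions for $\alpha(\partial_i)$ obtained from distinct $j,l\neq i$ yields $\partial_i\left(\partial_jw/\partial_lw\right)=0$, hence $\mathbf{grad}(w)$ is proportional to a vector field $(b_0(x_0),\ldots,b_k(x_k))$ with separated variables, and the coordinate change $\tilde x_i=\int b_i\,dx_i$ puts the web in the manifestly flat form $w=\tilde x_0+\cdots+\tilde x_k$. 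You instead stay at the level of the connection: you observe that a diagonal connection on a commuting frame has $R(\nabla)=d\alpha\otimes\Id$, and that the torsion-free condition forces $d\alpha=0$ (your cancellation using a common third index $j\notin\{a,b\}$ is correct, and needs only $k+1\geq 3$), so the connection is flat; affine coordinates together with invariance of the null cone under parallel transport then trivialise the structure. The paper's route is more explicit, producing the flat coordinates by quadrature, while yours isolates a statement of independent interest: for Veronese-type structures with $k>2$, a torsion-free totally geodesic paraconformal connection is automatically flat, so torsion is the only curvature-type obstruction present. Your treatment of the ``in particular'' clause --- a torsion-free connection trivially satisfies the normalisation $T(\nabla)(V(t),V'(t))\in\spn\{V(t)\}$ and hence coincides with the canonical connection by the uniqueness in Theorem \ref{thm3} --- is also correct, and is in fact spelled out more carefully than in the paper, which leaves this deduction implicit.
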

\begin{proof}
We assume that a Veronese web $\{\FF_t\}_{t\in\R}$ is described by a function $w$ as in Theorem \ref{thm3}. A connection $\nabla$ is defined by the formula $\nabla\partial_i=\left(\frac{d(\partial_iw)}{\partial_iw}+\alpha\right)\partial_i
$. Thus, if the torsion of $\nabla$ vanishes then $\alpha(\partial_i)=-\frac{\partial_i\partial_jw}{\partial_jw}$ for any $j\neq i$. Taking $l\neq j$ and computing $\alpha(\partial_i)$ in two ways we get that $\partial_i\left(\frac{\partial_jw}{\partial_l w}\right)=0$. It implies that $\mathbf{grad}(w)$ is proportional to a vector field $(b_0,\ldots,b_k)$ where $b_i$ is a function of $x_i$ only. All $b_i$ are non-vanishing functions because any two foliations from the family $\{\FF_t\}_{t\in\R}$ intersect transversally. If we change local coordinates $\tilde x_i:=\int b_idx_i$ then still $\ker d\tilde x_i=T\FF_{t_i}$ and we get that in new coordinates the web is described by $w=\tilde x_0+\ldots+\tilde x_k$ which means that the corresponding paraconformal structure is flat.
\end{proof}

\paragraph{The Bryant connection.}
As mentioned before, there is a unique torsion-free paraconformal connection (Bryant connection) in the case of paraconformal structures defined by equations of order 4. It follows from above that in the case of Veronese webs the one-form $\beta$, involving the torsion-free connection via $\nabla V(t)=\alpha V(t)+\beta V'(t)$ does not vanish unless the structure is flat. Precisely, $\beta=\beta_0+t\beta_1+t^2\beta_2$ for some one-forms $\beta_i$ which do not depend on $t$. The one-forms $\beta_i$ can be computed explicitly. We shall do this in order to show the difference between totally geodesic connections and the Bryant connection.

Assume that a paraconformal structure is given by $V(t)=V_0+ tV_1(t)+t^2V_2(t)+t^3V_3(t)$ and let us introduce structural functions $c_{ij}^l$ by
$$
[V_i,V_j]=\sum_{l=0}^3c_{ij}^lV_l
$$
and let $\eta_0,\eta_1,\eta_2,\eta_3$ be the dual one-forms such that $\eta_i(V_j)=\delta_{ij}$. Then
\begin{eqnarray*}
&&\beta_0=\frac{1}{3}c_{02}^3\eta_0+\frac{1}{3}c_{12}^3\eta_1+(2c_{03}^2-c_{02}^1)\eta_2-c_{03}^1\eta_3,\\
&&\beta_1=(c_{03}^3-c_{02}^2)\eta_0+ \left(\frac{1}{3}c_{01}^0+\frac{1}{3}c_{13}^3-c_{03}^2\right)\eta_1+ \left(\frac{1}{3}c_{23}^3+\frac{1}{3}c_{02}^0-c_{03}^1\right)\eta_2+(c_{03}^0-c_{13}^1)\eta_3,\\
&&\beta_2=-c_{03}^2\eta_0+(2c_{03}^1-c_{13}^2)\eta_1+\frac{1}{3}c_{12}^0\eta_2+\frac{1}{3}c_{13}^0\eta_3,
\end{eqnarray*}
and additionally
\begin{eqnarray*}
\alpha=(3c_{02}^2-2c_{03}^3)\eta_0+(3c_{03}^2-c_{01}^0)\eta_1-c_{02}^0\eta_2-c_{03}^0\eta_3.
\end{eqnarray*} 
To get these expressions one considers $\nabla V(t)=\alpha V(t)+\beta V'(t)$ which gives $\nabla V_i$ in terms of $\alpha$ and $\beta_i$. Then the vanishing of the torsion gives 24 linear equations for 16 unknown functions: $\beta_i(V_j)$ and $\alpha(V_j)$, $i=0,1,2$, $j=0,\ldots,3$. However, the structural functions $c_{ij}^l$ satisfy eight additional linear relations given explicitly in \cite{K1}. The additional relations are exactly obstructions for the vanishing of the torsion. 

In the case of Veronese webs
$$
V_i=(-1)^{i+1}\binom{3}{i}\sum_{j=0}^3\frac{t_j^{3-i}}{\partial_j w}\left(\prod_{l\in\{0,1,2,3,4\}\setminus j}\frac{1}{t_l-t_j}\right)\partial_j
$$
and
$$
c_{ij}^l=\sum_{a,b=0}^3 d_{ij,ab}^l \frac{\partial_a\partial_b w}{\partial_a w\partial_b w}
$$
where $d_{ij,ab}^l$ are certain constants depending on $t_i$'s.

\paragraph{The Zakharevich conjecture.} 
The proof of Theorem \ref{thm3} gives also an elementary proof of the following result which was previously proven by Panasyuk \cite{P} in the analytic category and further generalised in \cite{BD} (the problem is called the Zakharevich conjecture in \cite{P}).
\begin{corollary}\label{cor_webs2}
The integrability condition \eqref{eq_integrability} is satisfied for any $t$ if and only if it is satisfied for $k+3$ distinct values of $t$.
\end{corollary}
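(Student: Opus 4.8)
The plan is to piggyback entirely on the factorisation of the integrability condition already established in the proof of Theorem~\ref{thm3}. One direction is trivial: if \eqref{eq_integrability} holds for every $t$ then it holds in particular for any prescribed $k+3$ values. For the converse I would start from the assumption that $\omega(t)\wedge d\omega(t)=0$ at $k+3$ distinct parameters, single out $k+2$ of them, relabel those $t_0,\ldots,t_{k+1}$, and keep the remaining one as an auxiliary value $s$. The idea is that the $k+2$ parameters will be spent putting $\omega$ into the normal form \eqref{eq_omega}, while the single extra parameter $s$ carries all the genuine information.

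First I would exploit integrability at $t_0,\ldots,t_k$. Since $t\mapsto\R\omega(t)$ is a nondegenerate Veronese curve, the covectors $\omega(t_0),\ldots,\omega(t_k)$ are linearly independent (a Vandermonde argument in the coframe $\omega_0,\ldots,\omega_k$), so each $\ker\omega(t_i)$ is an integrable corank-one distribution and the $k+1$ of them are pairwise transverse; Frobenius then yields coordinates $x_0,\ldots,x_k$ with $\ker\omega(t_i)=\ker dx_i$, and integrability at $t_{k+1}$ supplies in the same way a function $w$ with $\ker\omega(t_{k+1})=\ker dw$. The crucial observation is that these $k+2$ kernel conditions already determine $\omega$ up to scale. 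Writing $\omega(t)=\sum_a c_a(t)\,dx_a$ with each $c_a$ of degree $\le k$ in $t$, the requirement $\omega(t_m)\in\R\,dx_m$ for $m\neq a$ forces $c_a$ to have the $k$ roots $t_m$, $m\in\{0,\ldots,k\}\setminus\{a\}$, so $c_a(t)=\lambda_a\prod_{m\neq a}(t-t_m)$ for a function $\lambda_a$; imposing $\omega(t_{k+1})\in\R\,dw$ then fixes the $\lambda_a$ up to one common factor and reproduces exactly \eqref{eq_omega}. As \eqref{eq_integrability} is unaffected by multiplying $\omega$ by a nowhere-vanishing function, I may assume $\omega$ is given by \eqref{eq_omega} verbatim.

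With $\omega$ in this form, the computation in the proof of Theorem~\ref{thm3} gives
\[
\omega(t)\wedge d\omega(t)=\sum_{i<j<l}P_{ijl}(t)\,E_{ijl}\,dx_i\wedge dx_j\wedge dx_l,
\]
where $E_{ijl}=\sum_{cycl(i,j,l)}a_{ij,l}\,\partial_i\partial_j w\,\partial_l w$ is independent of $t$ and each $P_{ijl}$ has zeros exactly at $t_0,\ldots,t_{k+1}$. Since the auxiliary value $s$ is distinct from all of $t_0,\ldots,t_{k+1}$, one has $P_{ijl}(s)\neq0$ for every triple, so integrability at $t=s$ forces $E_{ijl}=0$ for all $i<j<l$, that is, \eqref{eq_hirota} holds. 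But then every coefficient in the display above vanishes identically in $t$, hence $\omega(t)\wedge d\omega(t)=0$ for all $t$, as claimed.

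I expect the one nontrivial point to be the middle step: checking that integrability at the $k+2$ values $t_0,\ldots,t_{k+1}$ \emph{alone} already puts $\omega$ into the normal form \eqref{eq_omega}, without assuming integrability at all $t$. This is exactly where the count $k+2$ enters; the final extra point is needed only to annihilate the constant tensors $E_{ijl}$. Once the normal form is secured, the factorisation through the polynomials $P_{ijl}$ is inherited verbatim from Theorem~\ref{thm3} and the argument closes.
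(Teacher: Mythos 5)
Your proof is correct and takes essentially the same route as the paper: integrability at $k+2$ of the values puts $\omega$ into the normal form \eqref{eq_omega}, and then integrability at the one remaining value, where the polynomials $P_{ijl}$ are nonzero, forces \eqref{eq_hirota}, whence Theorem \ref{thm3} gives integrability for all $t$. The only difference is that you spell out explicitly the root-counting argument showing that the $k+2$ kernel conditions alone already determine $\omega$ up to scale, a point the paper leaves implicit in its derivation of \eqref{eq_omega}.
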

\begin{proof}
If $t_{k+2}\notin\{t_0,\ldots,t_{k+1}\}$ then $P_{ijl}(t_{k+2})\neq 0$ and we can divide \eqref{eq_hirota0} by $P_{ijl}(t_{k+2})$. Then we get that \eqref{eq_hirota} has to be satisfied if the integrability condition holds for $t_{k+2}$. But then it follows from Theorem \ref{thm3} that the integrability condition holds for any $t$.
\end{proof}

\paragraph{Integrable systems, bi-Hamiltonian systems and Lax tuples. }
Note that the constants $a_{ij,l}$ in Theorem \ref{thm3} satisfy $\sum_{cycl(i,j,l)}a_{ij,l}=0$. Thus \eqref{eq_hirota} the equation is the dispersionless Hirota equation in the case $k=2$ (c.f. \cite{DK1,FK,Z}). It follows that the first part of Theorem \ref{thm3} is a generalisation of the preprint \cite[Corollary 3.7]{Z}.

Let us define the following $t$-dependent vector fields
$$
L_i(t)=-\frac{\partial_iw}{\partial_0w}\partial_0+a_i(t-t_i)\partial_i
$$
where $a_i=\frac{t_{k+1}-t_0}{t_{k+1}-t_i}$, for $i=1,\ldots,k$. The vector fields are linear in $t$ and satisfy
$\omega(t)(L_i(t))=0$. Therefore they span the distribution tangent to $\FF_t$, for any $t\in\R$. We will call $(L_1(t),\ldots, L_k(t))$ the \emph{Lax tuple} of the paraconformal structure of a Veronese type. One verifies that the Lax tuple commutes, i.e.
$$
[L_i,L_j]=0,
$$
if and only if \eqref{eq_int_sys} is satisfied. Thus, \eqref{eq_int_sys} is equivalent to \eqref{eq_hirota}. In this way we recover the hierarchy of integrable systems \cite[Equation 6]{DK1}. The following result extends \cite[Theorem 4.1]{DK1} to the case of arbitrary $k$.

\begin{proposition}
Let $N=M\times \R^k$ and let $y_1,\ldots,y_k$ be coordinates on $\R^k$. If $(L_i(t))_{i=}^{k}$ is a Lax tuple on $M$ defined by a Veronese web then 
$$
t\mapsto \sum_{i=1}^{k}L_i(t)\wedge\partial_{y_i}
$$
is a bi-Hamiltonian structure on $N$ and the construction is converse to the Gelfand-Zakharevich reduction.
\end{proposition}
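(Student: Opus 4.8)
The plan is to verify directly that the pencil is Poisson and then to identify its kernel with the Veronese one-form $\omega(t)$, which makes the reduction to $M$ transparent. Write $\Pi(t)=\sum_{i=1}^{k}L_i(t)\wedge\partial_{y_i}$; since each $L_i(t)$ is affine in $t$, the bivector $\Pi(t)=\Pi_0+t\Pi_1$ is linear in $t$, so its self Schouten bracket is a quadratic polynomial in $t$, and I use the splitting $TN=TM\oplus T\R^k$ in which the $L_i(t)$ are tangent to the $M$-factor and the $\partial_{y_i}$ to the $\R^k$-factor.

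First I would show $[\Pi(t),\Pi(t)]=0$ for every $t$. Expanding the Schouten bracket on decomposable bivectors, the term $[L_i(t)\wedge\partial_{y_i},\,L_j(t)\wedge\partial_{y_j}]$ is a sum of four pieces, each proportional to one of $[L_i,L_j]$, $[L_i,\partial_{y_j}]$, $[\partial_{y_i},L_j]$, $[\partial_{y_i},\partial_{y_j}]$. The brackets $[L_i,\partial_{y_j}]$ and $[\partial_{y_i},\partial_{y_j}]$ vanish because the coefficients of $L_i(t)$ depend only on the coordinates of $M$ while $\partial_{y_j}$ differentiates only the $\R^k$-coordinates, and $[L_i(t),L_j(t)]=0$ identically in $t$ by the commuting Lax tuple condition, which for a Veronese web is equivalent to \eqref{eq_int_sys}. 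Hence every summand vanishes and $[\Pi(t),\Pi(t)]=0$ for all $t$, so $\{\Pi(t)\}$ is a Poisson pencil.

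Second I would establish the Kronecker (corank-one) type, matching the odd-dimensional Gelfand--Zakharevich framework on $\dim N=2k+1$. The image of $\Pi(t)$ is spanned by $L_1(t),\ldots,L_k(t),\partial_{y_1},\ldots,\partial_{y_k}$; for generic $t$ the $L_i(t)$ are independent and span $T\FF_t\subset TM$, so $\rk\Pi(t)=2k$. Dually, $\iota_\xi\Pi(t)=\sum_i\bigl(\xi(L_i)\partial_{y_i}-\xi(\partial_{y_i})L_i\bigr)$, and since the two groups of vector fields lie in complementary summands of $TN$ this vanishes iff $\xi(L_i)=\xi(\partial_{y_i})=0$ for all $i$. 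Therefore $\ker\Pi(t)=\spn\{\pi_M^{*}\omega(t)\}$, where $\pi_M\colon N\to M$ is the projection and $\omega(t)$ is the Veronese one-form \eqref{eq_omega}: the condition $\xi(\partial_{y_i})=0$ forces $\xi$ to be horizontal, and $\xi(L_i)=0$ forces its restriction to $TM$ to annihilate $T\FF_t$.

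Finally, this identity is exactly what exhibits $\pi_M$ as the reduction inverse to that of \cite{GZ}. The symplectic foliation of each $\Pi(t)$ is the hyperplane distribution annihilated by $\pi_M^{*}\omega(t)$, hence equals the pullback foliation $\pi_M^{-1}\FF_t$, whose leaf space is the leaf space of $\FF_t$ on $M$; and the rational normal curve $t\mapsto\R\,\pi_M^{*}\omega(t)$ projects to the Veronese curve $t\mapsto\R\,\omega(t)$ defining the web. Thus reduction of $(N,\Pi(t))$ along $\pi_M$ returns the original Veronese web, i.e.\ the construction is converse to the Gelfand--Zakharevich reduction; for $k=2$ this is \cite[Theorem 4.1]{DK1}, which we thereby extend to arbitrary $k$. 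The main obstacle is precisely this last step: one must match the pencil with the exact output of the reduction, checking that the characteristic foliation used in \cite{GZ} is the vertical foliation of $\pi_M$ and that, in passing from the kernel one-forms to their local Casimir integrals (defined only up to a nonvanishing scale), the induced degree-$k$ curve of forms on the quotient is exactly $\omega(t)$. The computation $\ker\Pi(t)=\spn\{\pi_M^{*}\omega(t)\}$ supplies both ingredients, but the normalization and the integrability of $\omega(t)$ guaranteed by \eqref{eq_integrability} must be handled with care.
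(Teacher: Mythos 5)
The paper never proves this proposition: it is stated at the end of Section \ref{sec_Veronese} as an extension of \cite[Theorem 4.1]{DK1}, immediately after the remark that the Lax tuple commutes if and only if \eqref{eq_int_sys} holds, and no argument is given. So there is no proof of record to compare yours against; judged on its own, your argument is correct and supplies exactly the missing verification. Step (i) is sound: because the coefficients of the $L_i(t)$ depend only on the $M$-variables, the only potentially nonzero brackets in the Schouten expansion of $[\Pi(t),\Pi(t)]$ are the $[L_i(t),L_j(t)]$, and these vanish identically in $t$ by the commutativity of the Lax tuple, which the paper has reduced to \eqref{eq_int_sys}, i.e.\ to the web condition \eqref{eq_hirota}; since $t\mapsto[\Pi(t),\Pi(t)]$ is a quadratic polynomial vanishing for all $t$, you get $[\Pi_0,\Pi_0]=[\Pi_0,\Pi_1]=[\Pi_1,\Pi_1]=0$, so the pencil is Poisson including at $t=\infty$. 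Step (ii), $\ker\Pi(t)=\R\,\pi_M^{*}\omega(t)$, is right granted linear independence of the $L_i(t)$. Step (iii) is the definitional matching with \cite{GZ}, and your kernel computation does carry it: since $\omega_0,\ldots,\omega_k$ form a coframe, the span of the kernels over all $t$ is all of $\pi_M^{*}T^{*}M$, so the characteristic distribution of the pencil is exactly $\spn\{\partial_{y_1},\ldots,\partial_{y_k}\}$, the local quotient is $M$, and the kernel lines descend to the curve $t\mapsto\R\,\omega(t)$ whose integrable kernels are the original web $\{\FF_t\}$.

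Two refinements would tighten this. First, the $L_i(t)$ are linearly independent for \emph{every} $t$, not just generic $t$: at $t=t_i$ the field $L_i$ degenerates to a nonzero multiple of $\partial_0$ while the other $L_j$ keep their independent $\partial_j$-components, and at $t=\infty$ the normalized pencil is $\sum_i a_i\partial_i\wedge\partial_{y_i}$, also of rank $2k$. This is worth stating explicitly, because the Gelfand--Zakharevich framework on an odd-dimensional manifold requires the Kronecker property, i.e.\ constant corank one over the whole projective parameter line, and ``generic $t$'' is strictly weaker than what you actually have. Second, a caveat concerning the paper rather than your proof: the printed formula $L_i(t)=-\frac{\partial_iw}{\partial_0w}\partial_0+a_i(t-t_i)\partial_i$ does not in fact satisfy $\omega(t)(L_i(t))=0$ for $\omega(t)$ as in \eqref{eq_omega} (the $\partial_0$-component needs an extra factor $(t-t_0)$). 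Your argument is insensitive to this, since it uses only the structural properties you quote --- affine dependence on $t$, coefficients depending on the $M$-variables only, spanning of $T\FF_t$, and commutativity --- all of which hold for the corrected fields; but anyone implementing your kernel computation with the paper's literal formula would not recover $\pi_M^{*}\omega(t)$.
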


\appendix

\section{Appendix: formulae}\label{ap_formulae}
All formulae below are either taken from \cite{JK} or computed by hands using \cite[Proposition 2.9]{JK}. All vector fields $V^{(i)}$ are given up to a multiplicative factor $g$ from equation \eqref{eq_g} which can be neglected.

\vskip 2ex
{Order 2:}
$$
K_0=-\partial_0F+\frac{1}{2}X_F(\partial_1F)-\frac{1}{4}(\partial_1F)^2
$$

$$
V=\partial_1,\qquad V'=-\partial_0-\frac{1}{2}\partial_1F\partial_1.
$$

\vskip 2ex
{Order 3:}
\begin{eqnarray*}
K_0&=&\partial_0F-X_F(\partial_1F) +\frac{1}{3}\partial_1F\partial_2F+ \frac{2}{3}X_F^2(\partial_2F)- \\
 &&\frac{2}{3}X_F(\partial_2F)\partial_2F+ \frac{2}{27}(\partial_2F)^3,\\
K_1&=&\partial_1F-X_F(\partial_2F)+\frac{1}{3}(\partial_2F)^2.
\end{eqnarray*}
\begin{eqnarray*}
V&=&\partial_2,\\
V'&=&-\partial_1-\frac{2}{3}\partial_2F\partial_2,\\
V''&=&\partial_0+\frac{1}{3}\partial_2F\partial_1+ \left(\partial_1F+\frac{4}{9}(\partial_2F)^2 -\frac{2}{3}X_F(\partial_2F)\right)\partial_2.
\end{eqnarray*}

\vskip 2ex
{Order 4:}
\begin{eqnarray*}
K_0&=&-\partial_0F+X_F(\partial_1F)-X_F^2(\partial_2F)+\frac{3}{4}X_F^3(\partial_3F) -\frac{9}{16}X_F(\partial_3F)^2+\\
&&\frac{18}{64}X_F(\partial_3F)(\partial_3F)^2- \frac{3}{256}(\partial_3F)^4-\frac{1}{4}\partial_1F\partial_3F +\frac{1}{2}X_F(\partial_2F)\partial_3F -\\
&&\frac{3}{4}X_F^2(\partial_3F)\partial_3F+ \frac{1}{4}X_F(\partial_3F)\partial_2F-\frac{1}{16}\partial_2F(\partial_3F)^2,\\
K_1&=&-\partial_1F+2X_F(\partial_2F)-2X_F^2(\partial_3F)-\frac{1}{2}\partial_2F\partial_3F +\frac{3}{2}X(\partial_3F)\partial_3F-\frac{1}{8}(\partial_3F)^3,\\
K_2&=&-\partial_2F+\frac{3}{2}X_F(\partial_3F)-\frac{3}{8}(\partial_3F)^2.
\end{eqnarray*}
\begin{eqnarray*}
V&=&\partial_3,\\
V'&=&-\partial_2-\frac{3}{4}\partial_3F\partial_3,\\
V''&=&\partial_1+\frac{1}{2}\partial_3F\partial_2+\left(\frac{9}{16}(\partial_3F)^2 -\frac{3}{4}X_F(\partial_3F)\right)\partial_3,\\
V'''&=&-\partial_0 -\frac{1}{4}\partial_3F\partial_1 +\left(\frac{5}{4}X_F(\partial_3F)-\frac{7}{16}(\partial_3F)^2\right)\partial_2+\\
&&\left(\frac{27}{16}X_F(\partial_3F)-\frac{3}{4}X_F^2(\partial_3F)-\partial_1F -\frac{1}{2}\partial_2F\partial_3F -\frac{27}{64}(\partial_3F)^3\right)\partial_3.
\end{eqnarray*}

\vskip 2ex
{General case, equations of order $k+1$:}
$$
K_{k-1}=(-1)^k\left(\partial_{k-1}F-\frac{k}{2}X_F(\partial_kF)+\frac{k}{2(k+1)}(\partial_kF)^2\right).
$$
$$
V=\partial_k,\qquad V'=-\partial_{k-1}-\frac{k}{k+1}\partial_kF\partial_k,
$$
$$
V^{(i)}=\sum_{j=0}^i\binom{i}{j}X_F^j(g)\ad_{X_F}^{i-j}\partial_k.
$$
where $X_F^j(g)$ can be computed using \eqref{eq_g} several times.


\paragraph{Acknowledgements.} The work has been partially supported by the Polish National Science Centre grant DEC-2011/03/D/ST1/03902.


\begin{thebibliography}{99}
\bibitem{A} J. Adachi, \textit{Global stability of special multi-flags}, Israel J. Math. 179 (2010) 29--56.
\bibitem{B} R. Bryant, \textit{Two exotic holonomies in dimension four, path geometries, nad twistor theory}, Amer. Math. Soc. Proc. Symp. Pure Math. 53 (1991) 33--88.
\bibitem{BDE} R. Bryant, M. Dunajski, M. Eastwood, \textit{Metrisability of two-dimensional projective structures}, J. Differential Geom. 83, No. 3 (2009) 465--499.
\bibitem{BD} T. Bouetou, J. Dufour, \textit{Veronese curves and webs: interpolation}, International Journal of Mathematics and Mathematical Sciences, Vol. 2006, Issue 19 (2006).
\bibitem{C1} E. Cartan, \textit{Sur les vari\'et\'es a connexion projective}, Bull. Soc. Math. France 52 (1924) 205--241.
\bibitem{C2} E. Cartan, \textit{Sur une classe d'espaces de Weyl}, Ann. Sci. Ecole Norm. Supp. 60 (1943) 1--16.
\bibitem{C3} E. Cartan, \textit{La geometria de las ecuaciones diferenciales de tercer orden}, Revista. Mat. Hispano-Amer. 4 (1941) 1--31.
\bibitem{Ch} S-S. Chern, \textit{The Geometry of the Differential Equation $y'''=F(x,y,y',y'')$}, Sci Rep. Nat. Tsing Hua Univ. 4 (1940) 97--111.
\bibitem{CS} M. Crampin, D. Saunders, \textit{Cartan’s concept of duality for second-order ordinary differential equations}, J. Geom. Phys. 54 (2005) 146--172.
\bibitem{Db} B. Doubrov, \textit{Contact trivialization of ordinary differential equations}, Differential Geometry and Its Applications, Proc. Conf., Opava, (2001) 73--84.
\bibitem{Db2} B. Doubrov, \textit{Generalized Wilczynski invariants for non-linear ordinary differential equations}, The IMA Volumes in Mathematics and its Applications, Vol. 144 (2008) 25-40.
\bibitem{D} M. Dunajski, \textit{A class of Einstein--Weyl spaces associated to an integrable system of 
hydrodynamic type}, J. Geom. Phys. 51 (2004) 126--137.
\bibitem{DT} M. Dunajski, P. Tod, \textit{Paraconformal geometry of n-th order ODEs, and exotic holonomy in dimension four}, J. Geom. Phys. 56 (2006) 1790--1809.
\bibitem{DK1} M. Dunajski, W. Kry\'nski, \textit{Einstein--Weyl geometry, dispersionless Hirota equation and Veronese webs}, Math. Proc. Camb. Phil. Soc., Vol. 157, Issue 01 (2014) 139--150.
\bibitem{DK2} M. Dunajski, W. Kry\'nski, \textit{Point invariants of third order ODEs and hyper-CR Einstein-Weyl structures}, Journal of Geometry and Physics, Vol. 86 (2014) 296--302.
\bibitem{FK} E. Ferapontov, B. Kruglikov, \textit{Dispersionless integrable systems in 3D and Einstein-Weyl
geometry}, J. Differential Geom. 97, No. 2 (2014) 215--254.
\bibitem{FKN} S. Frittelli, C. Kozameh, E.T. Newman, \textit{Differential geometry from differential equations}, Comm. Math. Phys. 223 (2001) 383--408.
\bibitem{GZ} I. M. Gelfand and I. Zakharevich, \textit{Webs, Veronese curves, and bi-Hamiltonian systems}, Journal of Functional Analysis, Vol. 99, No. 1 (1991) 150--178.
\bibitem{GN} M. Godli\'nski, P. Nurowski, \textit{GL(2,R) geometry of ODEs}, J. Geom. Phys. 60 (2010) 991--1027.
\bibitem{G} D. A. Grossman, \textit{Torsion-free path geometries and integrable second order ODE systems}, Selecta Mathematica, Vol. 6, Issue 4 (2000) 399--342. 
\bibitem{H} N. Hitchin, \textit{Complex manifolds and Einstein's equations}, Twistor Geometry and Non-Linear Systems, Lecture Notes in Mathematics, Vol. 970 (1982) 73--99.
\bibitem{J} B. Jakubczyk, \textit{Curvatures of single-input control systems}, Control and Cybernetics 38, No. 4B. (2009) 1375--1391.
\bibitem{JK} B. Jakubczyk, W. Kry\'nski, \textit{Vector fields with distributions and invariants of ODEs}, Journal of Geometric Mechanics 5, (2013) 85--129.
\bibitem{Kr} B. Kruglikov, \textit{Point classification of second order ODEs: Tresse classification revisited and beyond},  Differential Equations - Geometry, Symmetries and Integrability, Abel Symposia, Vol. 5 (2009) 199--221.
\bibitem{K1} W. Kry\'nski, \textit{Paraconformal structures and differential equations}, Differential Geometry and its Applications, Vol. 28 (2010) 523--531.
\bibitem{K2} W. Kry\'nski, \textit{Geometry of isotypic Kronecker webs}, Central European Journal of Mathematics, Vol. 10 (2012) 1872--1888.
\bibitem{K3} W. Kry\'nski, \textit{Webs and projective structures on a plane}, Differential Geometry and Its Applications, Vol. 37 (2014) 133--140.
\bibitem{MP} S. Merkulov, H. Pedersen, \textit{Projective structures on moduli spaces of compact complex hypersurfaces}, Proc.
American Math. Soc., Vol. 125, No. 2 (1997) 407--416.
\bibitem{MZ} R. Montgomery, M. Zhitomirskii, \textit{Geometric approach to Goursat flags}, Ann. Inst. H. Poincaré Anal. Non Linéaire 18, No. 4 (2001) 459--493.
\bibitem{N} P. Nurowski, \textit{Differential equations and conformal structures}, J. Geom. Phys., Vol. 55 (2005) 19--49.
\bibitem{N2} P. Nurowski, \textit{Comment on $GL(2,\R)$ geometry of 4th order ODE's}, J. Geom. Phys., Vol. 59 (2009) 267--278.
\bibitem{P} A. Panasyuk, \textit{On integrability of generalized Veronese curves of distributions}, Rep. Math. Phys. 50, No. 3 (2002) 291--297.
\bibitem{T} P. Tod, \textit{Einsten-Weyl spaces and third-order differential equations}, J. Math. Phys. 41 (2000) 5572--5581.
\bibitem{Tr} A. Tresse, \textit{D\'etermination des invariants ponctuels de l'\'equation diff\'erentielle ordinaire du second ordre $y''=\omega(x,y,y')$}, Leipzing, 1896.
\bibitem{W} E.\,Wilczynski, \textit{Projective differential geometry of curves and rules surfaces}, Teubner, 1906.
\bibitem{Wu} K.\,W\"unschmann, \textit{\"Uber Ber\"uhrungsbedingungen bei Integralkurven von Differentialgleichungen}, Dissertation, Greiswald, 1905.
\bibitem{Z} I. Zakharevich, \textit{Nonlinear wave equation, nonlinear Riemann problem, and the twistor transform of Veronese webs}, arXiv:math-ph/0006001 (2000).
\end{thebibliography}
\end{document}